\newtheorem{thm}{Theorem}[section]
\newtheorem{lem}[thm]{Lemma}
\theoremstyle{definition}
\newtheorem{defn}{Definition}[section]
\newtheorem{rem}{Remark}[section]
\numberwithin{equation}{section}
\begin{document}
\title[Stochastic wave equations with nonlinear damping and source terms]
      {stochastic wave equations with nonlinear damping and source terms}%
\author[Gao, H.-J.]{Gao Hongjun$^{1, 3, *}$}
\author[Guo, B-L.]{Guo Boling$^4$}
\author[Liang, F.]{Liang Fei$^{1, 2}$}%

\address{1. Jiangsu Provincial Key Laboratory for Numerical Simulation of
Large Scale Complex Systems\\ School of Mathematical Science,
Nanjing Normal University, Nanjing 210046, PR China}

\address{2. Department of Mathematics, An Hui Science And Technology
University, Feng Yang, 233100, Anhui, PR China}

\address{3. Center of Nonlinear Science\\ Nanjing University, Nanjing
210093, China\\}

\address{4. Institute of Applied Physics and
Computational
Mathematics\\ P. O. Box 8009, Beijing 100088, China\\}


\thanks{Supported in part by a China NSF Grant No. 10871097,
No. 11028102 and National Basic Research Program of China (973
Program) No. 2007CB814800.}
\thanks{*Corresponding author.}
\thanks{E-mail: gaohj@hotmail.com, gaohj@njnu.edu.cn(H.Gao),\ fliangmath@126.com}
\subjclass[2000]{60H15, 35L05, 35L70.}%
\keywords{Stochastic wave equations; Nonlinear damping; explosive
solutions; energy inequality.}


\begin{abstract}
In this paper, we discuss an initial boundary value problem for
the stochastic wave equation involving the nonlinear damping term
$|u_t|^{q-2}u_t$ and a source term of the type $|u|^{p-2}u$. We
firstly establish the local existence and uniqueness of solution
by the Galerkin approximation method and show that the solution is
global for $q\geq p$. Secondly, by an appropriate energy
inequality, the local solution of the stochastic equations will
blow up with positive probability or explosive in energy sense for
$p>q$.

\end{abstract}
\maketitle

\section{Introduction}

The wave equation of the following form
\begin{eqnarray}\label{main}
  \begin{cases}
     u_{tt}-\Delta u+a|u_t|^{q-2}u_t=b|u|^{p-2}u,\ \ \ &(x,t)\in D \times (0,T),\\
    u(x,t)=0,            &(x,t)\in\partial D\times (0,T),\\
    u(x,0)=u_{0}(x),\ \ \ u_t(x,0)=u_1(x),\ \ \ &x\in D,
\end{cases}
\end{eqnarray}
where $D$ is a bounded domain in $\mathbb{R}^d$ with a smooth
boundary $\partial D$, $a,\ b>0$ are constants, has been
extensively studied and results concerning existence, blow-up and
asymptotic behavior of smooth, as well as weak solutions have been
established by several authors over the past three decades. For
$b=0$, it is well known that the damping term assures global
existence and decay of the solution energy for arbitrary initial
data (see \cite{HZ} and \cite{K}). For $a=0$, the source term
causes finite time blow-up of solutions with large initial data
(negative initial energy), see \cite{B} and \cite{KL}. The
interaction between the damping term $a|u_t|^{q-2}u_t$ and the
source term $b|u|^{p-2}u$ makes the problem more interesting. This
situation was first considered by Levine \cite{L,L1} in the linear
damping case ($q=2$), where he showed that solutions with negative
initial energy blow up in finite time. In \cite{GT}, Georgiev and
Todorova extended Levine's result to the nonlinear damping case
($q>2$). In their work, the authors introduced a new method and
determined relations between $q$ and $p$ for which there is finite
time blow-up. Specifically, they showed that solutions with
negative energy continue to exist globally in time if $q\geq p\geq
2$ and blow up in finite time if $p>q \geq 2$ and the initial
energy is sufficiently negative. Messaoudi \cite{M1} extended the
blow-up result of \cite{GT} to solutions with only negative
initial energy. For related results, we refer the reader to Levine
and Serrin \cite{LS}, Levine and Ro Park \cite{LR}, Vitillaro
\cite{V} and Messaoudi and Said-Houari \cite{MS}.

In fact, the driving force may be affected by the environment
randomly. In view of this, we consider the following stochastic
wave equations
\begin{eqnarray}\label{smain}
  \begin{cases}
     u_{tt}-\Delta u+|u_t|^{q-2}u_t=|u|^{p-2}u+\varepsilon
     \sigma (u,\nabla u,x,t)\partial_t W(t,x),\ \ \ &(x,t)\in D \times (0,T),\\
    u(x,t)=0,            &(x,t)\in\partial D\times (0,T),\\
    u(x,0)=u_{0}(x),\ \ \ u_t(x,0)=u_1(x),\ \ \ &x\in D,
\end{cases}
\end{eqnarray}
where $q\geq2,\ p>2$, $\varepsilon$ is given positive constant
which measures the strength of the noise, and $W(t,x)$ is a Wiener
random field, which will be defined precisely later, and the
initial data $u_0(x)$ and $u_1(x)$ are given functions.

To motivate our work, let us recall some results regarding
stochastic wave equations with linear damping ($q=2$). For the
blow-up results, Chow \cite{C} discussed a class of
non-dissipative stochastic wave equations with polynomial
nonlinearity in $\mathbb{R}^d$ with $d\leq 3$. Using the energy
inequality the author demonstrated the blow-up in finite time with
a positive probability or explosive in $L^2$ norm for an example
and studied the global existence of the solutions for the
equation. This blow-up result has been later generalized by the
same author in \cite{C3}. In a recent paper, using the energy
inequality, Bo et al. \cite{BTW} proposed sufficient conditions
that the solutions of a class of stochastic wave equations blow up
with a positive probability or explosive in $L^2$ sense. In those
papers, the main tool in proving explosive/blow-up is the
``concavity method" where the basic idea of the method is to
construct a positive defined functional $F(t)$ of the solution by
the energy inequality and show that $F^{-\alpha}(t)$ is a concave
function of $t$. Unfortunately, this method fails in the case of a
nonlinear damping term ($q>2$). For the global existence and
invariant measure, Chow \cite{C1,C2} studied properties of the
solution of (\ref{smain}) with $q=2$ such as asymptotic stability
and invariant measure and Brze$\acute{z}$niak et al. \cite{BM}
studied global existence and stability of solutions for the
stochastic nonlinear beam equations. There are also many other
works on the stochastic wave equations with global existence and
invariant measure for linear damping, see references in \cite{CN,
HC, DF, MM}.

Nonlinear stochastic wave equations with nonlinearity on the
damping were first studied by Pardoux \cite{EP}. But the progress
is little in nearly three decades. Recently, J.U. Kim \cite{JUK}
and  V. Barbu et al. \cite{BPT} considered an initial boundary
value stochastic wave equations with nonlinear damping and
dissipative damping, respectively. They proved the existence of an
invariant measure. However, to our knowledge, the
explosive/blow-up results with nonlinearity on the damping seems
to be studied here for the first time. Since the existence and
uniqueness of a solution for the deterministic equation
($\varepsilon=0$) is well known under some assumptions with
nonlinearity on the damping, we may anticipate similar results for
the stochastic equation. However, the methods used in earlier
works on the stochastic wave equation with linear damping do not
work. Hence, we will employ the Galerkin approximation method to
establish the local existence and uniqueness solution for
(\ref{smain}). For multiplicative noise, i.e., when $\sigma$
depend on $u$ and $\nabla u$, we need to obtain the mean energy
estimates, but this is some technical difficulty. This is also a
major hurdle for the uniqueness of a solution of (\ref{smain}). So
here we consider only additive noise, i.e. $\sigma(u,\nabla u,
x,t)=\sigma(t,x,\omega)$ so that the stochastic integral may be
well defined as an $L^2(D)$-valued continuous martingale. We will
prove the global solution of (\ref{smain}) for $q\geq p$.
Concerning explosive/blow-up results, we use the technique of
\cite{GT} with a modification in the energy functional due to the
different nature of the problems for $p>q$.

This paper is organized as follows. In Section $2$ we present some
assumptions and definitions needed for our work. In Section $3$,
we show the local existence and uniqueness solution of
(\ref{smain}) and prove the solution being global for $q\geq p$.
Section $4$ is devoted to the proof of the explosive solutions of
(\ref{smain}) for $p>q$.

\section{Preliminaries}

Firstly, let us introduce some notation used throughout this
paper. We set $H=L^2(D)$ with the inner product and norm denoted
by $(\cdot,\cdot)$ and $||\cdot||_2$, respectively. Denote by
$||\cdot||_q$ the $L^q(D)$ norm for $1\leq q\leq\infty$ and by
$||\nabla\cdot||_2$ the Dirichlet norm in $V=H^1_0(D)$ which is
equivalent to the $H^1(D)$ norm. We also set $q,\ p$ satisfy
\begin{eqnarray}\label{condition}
\begin{cases}
q\geq2,\ \ \ p>2,\ \ \ \max\{p,\ q\}\leq \displaystyle\frac{2(d-1)}{d-2}, \ \ \ &{\rm if}\ d\geq3,\\
 q\geq2,\ \ \ p>2,&{\rm if} \
       d=1,2,
\end{cases}
\end{eqnarray}
which implies that $H_0^1(D)$ is continuously compact embedded into
$ L^p(D)$. Hence, we have the Sobolev inequality
\begin{equation}\label{sobolev}
||u||_{2(p-1)}\leq c||\nabla u||_2,\ \ \ \forall u\in H_0^1(D),
\end{equation}
where $c$ is the embedding constant of $H_0^1(D)\subseteq L^p(D)$.
Using (\ref{sobolev}), we have the following inequality
\begin{equation}\label{sobolev1}
||u^{p-2}v||_2\leq c^{p-1}||\nabla u||_2^{p-2}||\nabla v||_2, \ \ \
\forall u,\ v\in H_0^1(D).
\end{equation}
In fact, when $d=1,\ 2$, let $q>1$ and $k=\frac{q}{q-1}$, by the
H\"{o}lder inequality and (\ref{sobolev}) we have
\begin{equation}\label{sobolev2}
||u^{p-2}v||_2\leq ||u||^{p-2}_{2(p-2)q}||v||_{2k}\leq c^{p-1}||\nabla
u||_2^{p-2}||\nabla v||_2.
\end{equation}
When $d>2$, set $q=\frac{d}{(d-2)(p-2)}>1$. Then
$k=\frac{d}{d-(d-2)(p-2)}\leq \frac{d}{d-2}$, (\ref{sobolev2}) is also
valid for $d>2$.

Let $(\Omega,P,\mathcal{F})$ be a complete probability space for
which a $\{\mathcal{F}_t,\ t\geq0\}$ of sub-$\sigma$-fields of
$\mathcal{F}$ is given. A point of $\Omega$ will be denoted by
$\omega$ and $\textbf{E}(\cdot)$ stands for expectation with
respect to probability measure $P$. When $\mathcal{O}$ is a
topological space, $\mathcal{B}$ denotes the Borel
$\sigma$-algebra over $\mathcal{O}$. Suppose that
$\{W(t,x):t\geq0\}$ is a $V$-valued $R$-Wiener process on the
probability space with the variance operator $R$ satisfying
$TrR<\infty$. Moreover, we can assume that $R$ has the following
form
\[
 Re_i=\lambda_ie_i,\ \ \ i=1,2,\cdots,
\]
where ${\lambda_i}$ are eigenvalues of $R$ satisfying
$\sum_{i=1}^\infty\lambda_i<\infty$ and $\{e_i\}$ are the
corresponding eigenfunctions with
$c_0:=\sup_{i\geq1}||e_i||_\infty<\infty$ (where
$||\cdot||_\infty$ denotes the super-norm). To simplify the
computations, we assume that the covariance operator $R$ and
$-\Delta$ with homogeneous Dirichlet boundary condition have a
common set of eigenfunctions, i.e., $\{e_i\}_{i=1}^\infty$ satisfy
\begin{eqnarray}\label{ef}
  \begin{cases}
     -\Delta e_i=\mu_ie_i,\ \ \ &x\in D,\\
     e_i=0,\ \ \ &x\in\partial D,
\end{cases}
\end{eqnarray}
and  form an orthonormal base of $V$. In this case,
\[
 W(t,x)=\sum_{i=1}^\infty\sqrt{\lambda_i}B_i(t)e_i,
\]
where $\{B_i(t)\}$ is a sequence of independent copies of standard
Brownian motions in one dimension. Let $\mathcal{H}$ be the set of
$L^0_2=L^2(R^{\frac{1}{2}}V,V)$-valued processes with the norm
\[
 ||\Psi(t)||_{\mathcal{H}}=\Big(\textbf{E}\int_0^t||\Psi(s)||_{L^0_2}^2ds\Big)^{\frac{1}{2}}=
 \Big(\textbf{E}\int_0^tTr\big(\Psi(s)R
 \Psi^*(s)\big)ds\Big)^{\frac{1}{2}}<\infty,
\]
where $\Psi^*(s)$ denotes the adjoint operator of $\Psi(s)$. Let
$\{t_k\}_{k=1}^n$ be a partition on $[0,T]$ such that
$0=t_0<t_1<\cdots<t_n=T$. For a process $\Psi(t)\in \mathcal{H}$,
define the stochastic integral with respect to the $R$-Wiener
process as
\begin{equation}\label{condition2}
 \int_0^t
 \Psi(s)dW(s)=\lim_{n\rightarrow\infty}\sum_{k=0}^{n-1}\Psi(t_k)(W(t_{k+1}\wedge t)-W(t_k\wedge
 t)),
\end{equation}
where the sequence converges in $\mathcal{H}$-sense. It is not
difficult to check that the integral process $\int_0^t
 \Psi(s)dW(s)$  is a martingale for any $\Psi(t)\in \mathcal{H}$, and the quadratic variation process is given by
\[
\Big\langle\Big\langle\int_0^t
 \Psi(s)dW(s)\Big\rangle\Big\rangle=\int_0^t
 Tr\big(\Psi(s)R
 \Psi^*(s)\big)ds.
\]
For more details about the infinite dimension Wiener process and
the stochastic integral, we refer to \cite{PZ}.

Finally, we give the definition of solution to (\ref{smain}). For
the definition of a solution, we assume that
\begin{equation}\label{mild}
 (u_0,u_1)\in
H_0^1(D)\times L^2(D),
\end{equation}
and that $\sigma(x,t)$ is $L^2(D)$-valued progressively measurable
such that
\begin{equation}\label{mild1}
  \textbf{E}\int_0^T
||\sigma(t)||_2^2dt<\infty,
\end{equation}
\begin{defn}\label{defn}
Under the assumption (\ref{mild}) and (\ref{mild1}), $u$ is said
to be a solution of (\ref{smain}) on the interval $[0,T]$ if
\begin{equation}\label{define}
 (u,u_t)\ \rm {is\ } H^1_0(D)\times L^2(D)\rm{-valued\ progressively\
 measurable},
\end{equation}
\begin{equation}\label{define1}
(u,u_t)\in L^2(\Omega; C([0,T];H_0^1(D)\times L^2(D))),\ \ \
u_t\in L^q((0,T)\times D),\ \ \
 \rm{for\ almost\ all }\ \omega,
\end{equation}
\begin{equation}\label{define2}
 u(0)=u_0,\ \ \ u_t(0)=u_1,
\end{equation}
\begin{equation}\label{define3}
u_{tt}-\Delta u+|u_t|^{q-2}u_t=|u|^{p-2}u+\varepsilon \sigma
(x,t)\partial_t W(t,x)
\end{equation}
holds in the sense of distributions over $(0,T)\times D$ for
almost all $\omega$.
\end{defn}
\begin{rem}
(\ref{define1}) and (\ref{define3}) imply that
\begin{eqnarray}\label{define5}
&&\big(u_t(t),\phi\big)=\big(u_1,\phi\big)-
\int_0^t\big(\nabla u,\nabla \phi\big)ds-\int_0^t\big(|u_s|^{q-2}u_s, \phi\big)ds\nonumber\\
  &&\quad\quad\quad\quad\quad\ +\int_0^t\big(|u|^{p-2}u, \phi\big)ds+\int_0^t\big(
  \phi,\varepsilon\sigma(x,s)dW_s\big),
\end{eqnarray}
for all $t\in[0,T]$ and all $\phi\in H_0^1(D)$. In fact,
(\ref{define5}) is a conventional form for the definition of
solution to stochastic differential equations. Here we say $u$ is
a strong solution of the equation (\ref{smain}).
\end{rem}

\section{ Existence and uniqueness of solution}

In this section, we deal with the local existence and uniqueness
of solution for problem (\ref{smain}) and  prove that the solution
of (\ref{smain}) is global for $q\geq p$. Let $f(u)=|u|^{p-2}u$.
For each $N\geq1$, define a $C^1$ function $\chi_N$ by
\begin{eqnarray*}
 \chi_N(x)=\left\{
                \begin{array}{ll}
                  1, \ \ \ & {\rm if}\ x\leq N,\\
                  \in (0,1), \ \ \ & {\rm if}\ N<x<N+1,\\
                  0 , \ \ \ & {\rm if}\ x\geq N+1,
                \end{array}
       \right.
\end{eqnarray*}
and further assume that $||\chi'_N||_\infty\leq2$. We define
\[
  f_N(u)= \chi_N(||\nabla u||_2)f(u),  \ \ \ u\in H_0^1(D).
\]
Then, it follows from (\ref{sobolev1}) that
\begin{equation}\label{condition6}
  ||f_N(u)-f_N(v)||_2\leq C_N||\nabla u-\nabla v||_2,  \ \ \ u,\ v\in
  H_0^1(D),
\end{equation}
where $C_N$ is a constant dependent only on $N$.  Let
$g(x)=|x|^{q-2}x$. For any $\lambda>0$, let
\[
 g_\lambda(x)=\frac{1}{\lambda}\big(x-(I+\lambda g)^{-1}(x)\big)=g(I+\lambda g)^{-1}(x),\ \ \ x\in \mathbb{R},
\]
where $g_\lambda$ is the Yosida approximation of the mapping $g$.
Since $g(x)$ satisfies maximal monotone and
$g'(x)=(q-2)|x|^{q-2}\geq0$ for any $x\in \mathbb{R}$, then
$g_\lambda\in C^1(\mathbb{R})$ and satisfies (see Pazy \cite{AP})
\begin{equation}\label{local}
  0\leq g'_\lambda\leq\frac{1}{\lambda},\ \ \ |g_\lambda (x)|\leq
  |g(x)|,\ \ \ |g_\lambda(x)|\leq \frac{1}{\lambda}|x|, \ {\rm\ for
  \ any
  }\ x\in\mathbb{R}.
\end{equation}

\begin{lem}\label{lem:regularize}
Let $\{\lambda_n\}$ be a sequence of positive numbers, and
$\{x_n\}$ be a sequence of real numbers such that
$\lambda_n\rightarrow0$ and $x_n\rightarrow x$. Then
\[
\lim_{n\rightarrow\infty}g_{\lambda_n}(x_n)=g(x).
\]
\end{lem}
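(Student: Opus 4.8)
The plan is to exploit the explicit structure of the Yosida approximation $g_\lambda = g\circ(I+\lambda g)^{-1}$ together with the continuity of $g$. Write $J_\lambda := (I+\lambda g)^{-1}$ for the resolvent of the maximal monotone map $g(x)=|x|^{q-2}x$; since $g$ is single-valued and continuous on all of $\mathbb{R}$ (indeed $g\in C^1$ with $g'\ge 0$), $J_\lambda$ is a well-defined nonexpansive map on $\mathbb{R}$. The identity $g_{\lambda_n}(x_n) = g\bigl(J_{\lambda_n}(x_n)\bigr)$ reduces the claim to showing $J_{\lambda_n}(x_n)\to x$ and then invoking continuity of $g$.

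First I would show $J_{\lambda_n}(x_n)\to x$. From the definition of the resolvent, $y_n := J_{\lambda_n}(x_n)$ satisfies $y_n + \lambda_n g(y_n) = x_n$. Using the bound $|g_\lambda(x)|\le \frac{1}{\lambda}|x|$ from \eqref{local}, we get $|y_n - x_n| = \lambda_n |g_{\lambda_n}(x_n)| \le |x_n|$, so $\{y_n\}$ is bounded (as $x_n\to x$). More precisely, $|y_n - x_n| = |x_n - y_n|$ and from $y_n + \lambda_n g(y_n) = x_n$ together with the third estimate in \eqref{local} applied appropriately, one in fact has $|y_n - x_n| \le \lambda_n |g(y_n)|$; since $\{y_n\}$ is bounded and $g$ is continuous, $\{g(y_n)\}$ is bounded, whence $|y_n - x_n|\le C\lambda_n \to 0$. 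Combined with $x_n\to x$, this gives $y_n \to x$.

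Then, since $g$ is continuous, $g_{\lambda_n}(x_n) = g(y_n) \to g(x)$, which is the assertion.

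I do not expect any serious obstacle here; the only point requiring a little care is the uniform boundedness of $\{y_n\}$ (equivalently of $\{g(y_n)\}$), which is needed to conclude $|y_n - x_n| = \lambda_n|g(y_n)| \to 0$. This follows cleanly from the a priori bound $|g_\lambda(x)|\le\frac1\lambda|x|$ in \eqref{local} (giving $|y_n|\le |y_n-x_n|+|x_n|\le 2|x_n|$, hence boundedness) and the local boundedness of the continuous function $g$. Alternatively, one can avoid even this by noting monotonicity of $g$ forces $J_\lambda$ to be order-preserving and squeezing $y_n$ between $J_{\lambda_n}(x)$ and a shift, but the estimate-based argument above is the most direct.
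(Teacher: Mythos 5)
Your proposal is correct and follows essentially the same route as the paper: both introduce the resolvent point $y_n$ with $y_n+\lambda_n g(y_n)=x_n$, bound $\{y_n\}$ via $|g_\lambda(x)|\le\frac{1}{\lambda}|x|$ (equivalently nonexpansiveness of the resolvent), deduce $|x_n-y_n|\le C\lambda_n\to 0$ from local boundedness of $g$, and conclude by continuity of $g$ using $g_{\lambda_n}(x_n)=g(y_n)$. No gap.
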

\begin{proof}
There is some $L>0$ such that $|x_n|\leq L$ for all $n\geq 1$.
Since $g(x)$ is maximal monotone, let $y_n$ be a unique number
such that $y_n+\lambda_n g(y_n)=x_n$, for each $n\geq1$. Then we
have
\[
|y_n|\leq |x_n|\leq L,\ \ \ |x_n-y_n|\leq \lambda_n C,
\]
for each $n\geq1$, where $C=\sup_{|z|\leq L}|g(z)|$. Now the above
assertion follows from
\[
|g(x)-g_{\lambda_n}(x_n)|\leq|g(x)-g(x_n)|+|g(x_n)-g(y_n)|.
\]
\end{proof}

\begin{lem}\label{lem:regularize1}[See Lemma 1.3 in Lions \cite{LL}]
Let $D$ be a bounded domain in $\mathbb{R}^d,\ d\geq1$,
$\{\varphi_k\}$, $\varphi\in L^q(D),\ 1<q<\infty$. If
\[
||\varphi_k||_q\leq C\ \ \ {\rm and}\ \ \ \varphi_k(x)\rightarrow
\varphi(x)\ {\rm for\ almost\ all}\ x\in D,
\]
where $C$ is a constant, then $\varphi_k\rightarrow \varphi$
weakly in $L^q(D)$.
\end{lem}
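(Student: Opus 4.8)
The plan is to verify the defining property of weak convergence directly: writing $q'=q/(q-1)$ for the conjugate exponent, I would show that $\int_D \varphi_k\phi\,dx \to \int_D \varphi\phi\,dx$ for every $\phi\in L^{q'}(D)$. First I would record two preliminary facts. Applying Fatou's lemma to $|\varphi_k|^q$ together with the a.e. convergence $\varphi_k\to\varphi$ yields $\varphi\in L^q(D)$ with $\|\varphi\|_q\leq C$, so the sequence $\varphi_k-\varphi$ is bounded in $L^q(D)$ by $2C$. Moreover, since $D$ is a bounded domain, $|D|<\infty$, which makes Egorov's theorem available and guarantees $L^{q'}(D)\subseteq L^1(D)$ in the relevant estimates.

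Next, fix $\phi\in L^{q'}(D)$ and $\eps>0$. By absolute continuity of the Lebesgue integral there is $\delta>0$ such that $\|\phi\|_{L^{q'}(E)}<\eps$ for every measurable $E\subseteq D$ with $|E|<\delta$. By Egorov's theorem there is a measurable set $D_\delta\subseteq D$ with $|D\setminus D_\delta|<\delta$ on which $\varphi_k\to\varphi$ uniformly. I would then split
\[
\Big|\int_D(\varphi_k-\varphi)\phi\,dx\Big|\leq\Big|\int_{D_\delta}(\varphi_k-\varphi)\phi\,dx\Big|+\Big|\int_{D\setminus D_\delta}(\varphi_k-\varphi)\phi\,dx\Big|.
\]
For the first term, $\big|\int_{D_\delta}(\varphi_k-\varphi)\phi\,dx\big|\leq\|\varphi_k-\varphi\|_{L^\infty(D_\delta)}\|\phi\|_{L^1(D)}\to0$ as $k\to\infty$ by the uniform convergence on $D_\delta$ and $\phi\in L^1(D)$. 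For the second term, Hölder's inequality gives $\big|\int_{D\setminus D_\delta}(\varphi_k-\varphi)\phi\,dx\big|\leq\|\varphi_k-\varphi\|_q\,\|\phi\|_{L^{q'}(D\setminus D_\delta)}\leq 2C\eps$. Hence $\limsup_{k\to\infty}\big|\int_D(\varphi_k-\varphi)\phi\,dx\big|\leq 2C\eps$, and since $\eps>0$ is arbitrary the left side is $0$, which is the claim.

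The computation is routine; the only points requiring care are that a uniform bound on $\|\varphi_k-\varphi\|_q$ is genuinely needed — hence the Fatou step is not optional — and that Egorov's theorem is invoked only because $D$ has finite measure, which is where the hypothesis ``$D$ bounded'' enters. An alternative route would extract a weakly convergent subsequence using reflexivity of $L^q(D)$ for $1<q<\infty$, identify its weak limit with $\varphi$ via the pointwise convergence, and then upgrade to the full sequence by the usual subsequence argument; this works equally well, but the direct estimate above avoids invoking reflexivity, so I would present the direct version.
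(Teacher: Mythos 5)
Your argument is correct. The paper itself gives no proof of this lemma --- it is quoted verbatim from Lions (Lemme 1.3 of \emph{Quelques m\'ethodes\dots}) as a known tool --- so there is nothing in the text to match your proof against line by line; but your write-up is a complete and valid substitute for the citation. The three ingredients all check out: Fatou gives $\|\varphi\|_q\leq C$, hence $\|\varphi_k-\varphi\|_q\leq 2C$ uniformly; absolute continuity of $\int|\phi|^{q'}$ supplies the $\delta$; and Egorov (legitimate precisely because $|D|<\infty$) handles the good set, where uniform convergence together with $\phi\in L^{q'}(D)\subseteq L^1(D)$ kills the first term. The only mild difference from the classical route is the one you already flag: Lions's own proof first extracts a weakly convergent subsequence by reflexivity of $L^q$ for $1<q<\infty$, identifies the weak limit with $\varphi$ via the a.e.\ convergence (again through an Egorov-type localization), and then recovers the full sequence by the subsequence-of-a-subsequence argument. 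Your direct estimate proves weak convergence of the whole sequence in one pass and never invokes weak compactness, which is arguably cleaner; the compactness route, on the other hand, generalizes more readily to settings where a pointwise identification of the limit is not available. Either version is acceptable here.
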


In order to obtain the local existence and uniqueness of solution
for problem (\ref{smain}), we will first establish a lemma for the
regularized problem. Fix $\lambda$ and $N>0$, we will work on the
following initial boundary value problem
\begin{eqnarray}\label{regularize}
  \begin{cases}
     u_{tt}-\Delta u+g_\lambda(u_t)=f_N(u)+
     \varepsilon \sigma (x,t)\partial_t W(t,x),\ \ \ &(x,t)\in D \times (0,T),\\
    u(x,t)=0,            &(x,t)\in\partial D\times (0,T),\\
    u(x,0)=u_{0}(x),\ \ \ u_t(x,0)=u_1(x),\ \ \ &x\in D,
\end{cases}
\end{eqnarray}
where we suppose that
\begin{equation}\label{regularize1}
   (u_0,u_1)\in (H_0^1(D)\cap H^2(D))\times H_0^1(D)
\end{equation}
and that $\sigma(x,t)$ is $H_0^1(D)\cap L^\infty(D)$-valued
progressively measurable such that
\begin{equation}\label{regularize2}
\textbf{E}\int_0^T(||\nabla \sigma(t)||_2^2+||\sigma
(t)||_\infty^2)dt<\infty.
\end{equation}

\begin{lem}\label{lem:regularize2}
Assume (\ref{condition}), (\ref{regularize1}) and
(\ref{regularize2}) hold. Then there is a pathwise unique solution
$u$ of (\ref{regularize}) such that
\[
 u\in L^2\big(\Omega; L^\infty (0,T;H_0^1(D)\cap H^2(D))\big)\cap
 L^2\big(\Omega; C([0,T];H_0^1(D))\big)
\]
and
\[
 u_t\in L^2\big(\Omega; L^\infty (0,T;H_0^1(D))\big)\cap
 L^2\big(\Omega; C([0,T];L^2(D))\big).
\]
Moreover, it holds that
\[
 \textbf{E}\left(||u_t||^2_{L^\infty (0,T;H_0^1(D))}+||u||^2_{L^\infty (0,T;H_0^1(D)\cap H^2(D))}+\int_0^T\int_D
 g_\lambda(u_t)u_tdxdt\right)\leq C_N,
\]
where $C_N$ denotes a positive constant independent of $\lambda$.
\end{lem}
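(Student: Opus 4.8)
The plan is to construct the solution via a Galerkin (spectral) approximation using the common eigenbasis $\{e_i\}$ of $-\Delta$ and $R$, derive uniform-in-$\lambda$ energy estimates, pass to the limit, and then prove pathwise uniqueness by an energy argument on the difference of two solutions. Concretely, let $V_m=\mathrm{span}\{e_1,\dots,e_m\}$ and seek $u^m(t)=\sum_{i=1}^m c_i^m(t)e_i$ solving the projected SDE system
\begin{equation*}
(u^m_{tt},e_i)+(\nabla u^m,\nabla e_i)+(g_\lambda(u^m_t),e_i)=(f_N(u^m),e_i)+\varepsilon(\sigma,e_i)\dot B\text{-terms},
\end{equation*}
with $u^m(0),u^m_t(0)$ the projections of $u_0,u_1$. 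Because $g_\lambda\in C^1$ with $0\le g'_\lambda\le 1/\lambda$ (bounded Lipschitz) and $f_N$ is globally Lipschitz on $H^1_0$ by \eqref{condition6}, the coefficients of this finite-dimensional system are Lipschitz, so standard SDE theory gives a unique global $V_m$-valued solution for each $m$.

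The core is the uniform a priori estimate. Applying Itô's formula to the ``regularized energy''
\begin{equation*}
E_m(t)=\tfrac12\|u^m_t\|_2^2+\tfrac12\|\nabla u^m\|_2^2+\tfrac12\|\Delta u^m\|_2^2+\tfrac12\|\nabla u^m_t\|_2^2,
\end{equation*}
(the last two terms coming from testing the equation against $-\Delta u^m_t$, which is legitimate on $V_m$ since everything commutes with $-\Delta$), I would compute $d E_m$. The damping contributes $-(g_\lambda(u^m_t),u^m_t)\le 0$ and, after pairing with $-\Delta u^m_t$, a term $(g'_\lambda(u^m_t)\nabla u^m_t,\nabla u^m_t)\ge 0$ that has the \emph{right sign} to be dropped — this is exactly why one regularizes $g$ before differentiating. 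The source term $f_N(u^m)$ is controlled by \eqref{sobolev1} and the cutoff $\chi_N$, giving a bound $C_N(1+\|\nabla u^m_t\|_2+\|\nabla u^m\|_2)$; note the cutoff makes $\|\nabla u^m\|_2$ effectively bounded by $N+1$ wherever $f_N\neq 0$. The stochastic terms produce a martingale (zero expectation) plus an Itô correction controlled by \eqref{regularize2} via $\mathrm{Tr}R<\infty$ and $c_0=\sup_i\|e_i\|_\infty<\infty$; here the assumption $\sigma\in H^1_0\cap L^\infty$ is what lets the $-\Delta u^m_t$ pairing of the noise be estimated. After taking expectation, using Young's inequality and Gronwall, I get
\begin{equation*}
\mathbf{E}\sup_{[0,T]}E_m(t)+\mathbf{E}\int_0^T\!\!\int_D g_\lambda(u^m_t)u^m_t\,dx\,dt\le C_N,
\end{equation*}
with $C_N$ independent of $m$ \emph{and} $\lambda$ (using $g'_\lambda\ge0$ and $|g_\lambda(x)x|\le|g(x)x|$ appropriately, plus $\|u_0\|_{H^2}$, $\|u_1\|_{H^1}$ finite by \eqref{regularize1}). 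To upgrade $\sup E_m$ inside the expectation I would use the Burkholder–Davis–Gundy inequality on the stochastic integral.

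With these bounds, Banach–Alaoglu gives a subsequence with $u^m\rightharpoonup u$ weak-$*$ in $L^2(\Omega;L^\infty(0,T;H^1_0\cap H^2))$, $u^m_t\rightharpoonup u_t$ weak-$*$ in $L^2(\Omega;L^\infty(0,T;H^1_0))$, and $g_\lambda(u^m_t)\rightharpoonup \chi$ weakly in $L^2(\Omega\times(0,T)\times D)$ (here $g_\lambda$ Lipschitz means $g_\lambda(u^m_t)$ is bounded in $L^2$, so no monotonicity trickery is needed yet — $\chi=g_\lambda(u_t)$ follows from strong convergence of $u^m_t$ obtained via an Aubin–Lions / compactness argument in $L^2(\Omega\times D\times(0,T))$ combined with Lemma~\ref{lem:regularize1}). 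Passing to the limit in the Galerkin equations identifies $u$ as a solution of \eqref{regularize} with the claimed regularity and energy bound (the bound survives the limit by weak lower semicontinuity of norms). For pathwise uniqueness: if $u,\bar u$ are two solutions with the same data, set $w=u-\bar u$; then $w$ satisfies a linear-type equation with the differences $g_\lambda(u_t)-g_\lambda(\bar u_t)$ and $f_N(u)-f_N(\bar u)$ as forcing, and \emph{no noise term}. Testing against $w_t$ and using monotonicity of $g_\lambda$ (so $(g_\lambda(u_t)-g_\lambda(\bar u_t),w_t)\ge0$, which can be dropped) together with the Lipschitz bound \eqref{condition6} for $f_N$, a Gronwall argument in $\tfrac12\|w_t\|_2^2+\tfrac12\|\nabla w\|_2^2$ forces $w\equiv0$ a.s.

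\textbf{Main obstacle.} The delicate point is keeping all constants independent of $\lambda$ while still having enough regularity to justify the $-\Delta u^m_t$ test function. One must check that the only $\lambda$-dependent term arising from differentiating the $H^1$-energy of $u_t$, namely $(g'_\lambda(u^m_t)\nabla u^m_t,\nabla u^m_t)$, genuinely has a favorable sign and can be discarded (it is $\ge0$), and that the damping dissipation term $\int g_\lambda(u^m_t)u^m_t$ is itself nonnegative and absorbs rather than obstructs the estimate — this is exactly where $0\le g'_\lambda$ and $|g_\lambda(x)|\le|g(x)|$ from \eqref{local} are used. The compactness step for identifying the nonlinear damping limit (needing strong $L^2$ convergence of $u^m_t$ in space-time, hence a time-regularity estimate on $u^m_{tt}$ in a negative-order space, uniform in $m$) is the other technically heavy ingredient, but it is routine given the energy bounds.
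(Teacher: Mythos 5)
Your overall architecture coincides with the paper's: Galerkin approximation in the common eigenbasis of $-\Delta$ and $R$, It\^{o} energy identities at both the $H_0^1\times L^2$ and $H^2\times H^1_0$ levels, the sign observations $\int_D g_\lambda(u_m')\Delta u_m'\,dx=-\int_D g_\lambda'(u_m')|\nabla u_m'|^2\,dx\le 0$ and $g_\lambda(x)x\ge 0$, Burkholder--Davis--Gundy plus Gronwall to obtain a bound independent of $m$ and $\lambda$, and a monotonicity-plus-Lipschitz Gronwall argument for pathwise uniqueness. All of that matches \eqref{regularize4}--\eqref{regularize11} and \eqref{regularize22}--\eqref{regularize23}.

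The gap is in the limit passage. First, you propose to obtain strong convergence of $u^m_t$ by Aubin--Lions from ``a time-regularity estimate on $u^m_{tt}$ in a negative-order space, uniform in $m$,'' and call it routine. It is not: $u^m_{tt}$ contains the white-noise term $\varepsilon\mathcal{P}_m\sigma\,\partial_t W$, which is not bounded in any $W^{1,r}(0,T;H^{-1}(D))$-type space. The paper's device is to subtract the stochastic integral and estimate $u_m'-\varepsilon\mathcal{P}_m M(t)$ instead (see \eqref{regularize14}, \eqref{regularize20}, \eqref{regularize21}); only this difference has an integrable time derivative, and a.e.\ convergence of $u_m'$ is then recovered from the continuity of $M$. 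Second, you claim compactness ``in $L^2(\Omega\times D\times(0,T))$,'' but Aubin--Lions gives no compactness in the $\omega$ variable. The paper works pathwise: for each fixed $\omega$ it extracts an $\omega$-dependent subsequence, and this forces three further steps that are entirely absent from your proposal --- (i) showing that any two subsequential limits coincide, so that $u(\omega)$ is well defined; (ii) the measurability argument around \eqref{regularize24}, since a limit along $\omega$-dependent subsequences is not automatically progressively measurable; and (iii) the lower-semicontinuity/Fatou argument \eqref{regularize26} transferring the expectation bound from $u_m$ to $u$. Without (i)--(iii) your construction does not produce a stochastic process with the stated $L^2(\Omega;\cdot)$ regularity and the $\lambda$-independent bound $\mathbf{E}(\mathcal{A}_\lambda(u))\le C_N$. (Alternatively, since $g_\lambda$ and $f_N$ are globally Lipschitz for fixed $\lambda$ and $N$, one could prove the Galerkin sequence is Cauchy in $L^2(\Omega;C([0,T];H_0^1(D)\times L^2(D)))$ and avoid pathwise subsequences altogether, but that estimate would have to be carried out; you do not do so.)
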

\begin{proof}
Let
\[
 u_m(t,x)=\sum_{j=1}^m a_{m,j}(t)e_j(x),
\]
where $\{e_j\}_{j=1}^\infty$ is a complete orthonormal base of
$H_0^1(D)$ satisfying (\ref{ef}) and $a_{m,j}$ form a solution of
the following system of stochastic differential equations
\begin{eqnarray}\label{regularize3}
\begin{cases}
a_{m,j}''=-\mu_ja_{m,j}-\Big(g_\lambda\big(\sum_{j=1}^m
a_{m,j}'e_j\big),e_j\Big)+\Big(f_N\big(\sum_{j=1}^m
a_{m,j}'e_j\big),e_j\Big) +\big(e_j,\varepsilon\sigma (x,t)dW_t\big),\\
a_{m,j}(0)=(u_0,e_j),\ \ \  a_{m,j}'(0)=(u_1,e_j),
\end{cases}
\end{eqnarray}
for $1\leq j\leq m$. By It\^{o} formula, we have
\begin{eqnarray}\label{regularize4}
&&||u'_m(t)||^2_2+||\nabla u_m(t)||^2_2\leq||u'_m(0)||^2_2+
||\nabla u_m(0)||^2_2-2\int_0^t\int_D g_\lambda \big(u_m'(s)\big)u_m'(s)dxds\nonumber\\
  && +2\int_0^t\int_D f_N \big(u_m\big)u_m'dx ds+2\int_0^t\big(
  u_m',\varepsilon\sigma\big)dW_s+c_0^2Tr R \sum_{j=1}^m\int_0^t |\big(e_j,\varepsilon\sigma\big )|^2ds,
\end{eqnarray}
and
\begin{eqnarray}\label{regularize5}
&&||\nabla u'_m(t)||^2_2+||\Delta u_m(t)||^2_2\leq ||\nabla
u'_m(0)||^2_2+||\Delta u_m(0)||^2_2+
2\int_0^t\int_D g_\lambda \big(u_m'(s)\big)\Delta u_m'(s)dxds\nonumber\\
  && -2\int_0^t\int_D f_N \big(u_m(s)\big)
  \Delta u_m'dx ds+2\int_0^t\big(
  \nabla u_m',\varepsilon\nabla (\sigma dW_s)\big)\nonumber\\
  && +2c_0^2Tr R \sum_{j=1}^m\int_0^t|\big(\nabla e_j,\varepsilon\nabla\sigma\big
  )|^2ds+2\sum_{j=1}^m\sum_{i=1}^\infty \lambda_i\int_0^t
  \big|(e_j, \sigma\nabla e_i)\big|^2ds
\end{eqnarray}
for all $t\in[0,T]$ and almost all $\omega$, where
\[
 Tr R=\sum_{i=1}^\infty \lambda_i, \ \ \
 c_0:=\sup_{i\geq1}||e_i||_\infty.
\]
From (\ref{sobolev}), (\ref{sobolev1}) and (\ref{local}), we get
\begin{equation}\label{regularize6}
  \int_D f_N \big(u_m\big)u_m'dx\leq \int_D\chi_N (||\nabla u_m||_2)|u_m|^{p-1}|u_m'(s)|dx\leq C_N||\nabla
  u_m||_2 ||u_m'||_2,
\end{equation}
\begin{eqnarray}\label{regularize7}
 &&-\int_D f_N \big(u_m\big)\Delta u_m'dx=
 (p-1)\int_D \chi_N(||\nabla u_m||_2)|u_m|^{p-2}\nabla u_m \cdot \nabla
 u_m'dx\nonumber\\
 &&\quad\quad\quad\quad\quad\quad\quad\quad\quad \ \leq C_N(p-1)|||u_m|^{p-2}\nabla u_m||_2||\nabla
 u_m'||_2\leq C_N||\Delta u_m||_2||\nabla
 u_m'||_2,\quad\quad
\end{eqnarray}
and
\begin{equation}\label{regularize8}
  \int_D g_\lambda \big(u_m'(s)\big)\Delta u_m'(s)dx=-\int_D g_\lambda' \big(u_m'(s)\big)|\nabla
  u_m'(s)|^2dx\leq0.
\end{equation}
By the Burkholder-Davis-Gundy inequality, we have
\begin{eqnarray}\label{regularize9}
 &&\textbf{E}\left(\sup_{t\in[0,T]}\left|\int_0^t\big(
  u_m'(s),\varepsilon\sigma\big)dW_s\right|\right)
  \leq C\textbf{E}\Big(\sup_{t\in[0,T]}||u_m'||_2\Big(\varepsilon^2\sum_{i=1}^\infty\int_0^T
  \big(\sigma(x,t)Re_i, \sigma(x,t)e_i\big)dt\Big)^{\frac{1}{2}}\Big)\nonumber\\
 &&
 \quad\quad\quad\quad\quad\quad\quad\quad\quad\quad\quad\quad\quad\quad\
\ \leq \alpha\textbf{E}\Big(\sup_{t\in[0,T]}||u_m'||_2^2\Big)
 +\frac{C\varepsilon^2c_0^2}{\alpha}Tr R\textbf{E}\Big(\int_0^T
  ||\sigma(t)||^2_2 dt\Big),\quad\quad
\end{eqnarray}
and
\begin{eqnarray}\label{regularize10}
 &&\textbf{E}\left(\sup_{t\in[0,T]}\left|\int_0^t\big(
  \nabla u_m',\nabla (\sigma dW_s)\big)\right|\right)
  \leq\alpha\textbf{E}\Big(\sup_{t\in[0,T]}||\nabla
  u_m'||_2^2\Big)\nonumber\\
  &&\quad\quad\quad\quad\quad\quad\quad\quad\quad\quad\quad\quad\quad\quad\quad\
  \
  +\frac{C\varepsilon^2c_0^2}{\alpha}Tr R\textbf{E}\Big(\int_0^T
  \big(||\nabla \sigma(t)
  ||_2^2+||\sigma(t)||_\infty^2\big)dt\Big).\quad\quad\quad
\end{eqnarray}
Here and below, $C$ and $C_N$ denote positive constants
independent of $m$ and $\lambda$. From (\ref{regularize1}),
(\ref{regularize2}) and (\ref{regularize4})--(\ref{regularize10}),
by Gronwall's inequality, we have
\begin{equation}\label{regularize11}
 \textbf{E}\left(\sup_{t\in[0,T]}||\nabla u_m'||_2^2+\sup_{t\in[0,T]}|| u_m||_{H^1_0(D)\cap H^2(D)}
 +\int_0^T\int_D g_\lambda
 \big(u_m'(s)\big)u_m'(s)dxds\right)\leq C_N.
\end{equation}

Define
\begin{equation}\label{regularize12}
 \mathcal{A}_\lambda=||v||^2_{L^\infty(0,T;H^1_0(D)\cap
 H^2(D))}+||v'||^2_{L^\infty(0,T;H^1_0(D))}+\int_0^T\int_D g_\lambda
 \big(v'(s)\big)v'(s)dxds.
\end{equation}
It follows from (\ref{regularize12}) that
\begin{equation}\label{regularize13}
P\Big(\bigcup_{L=1}^{\infty}\bigcap_{j=1}^{\infty}\bigcup_{m=j}^{\infty}\{\mathcal{A}_\lambda(u_m)\leq
L\}\Big)=1.
\end{equation}
Let $\mathcal{P}_m$ is the orthogonal projection of $L^2(D)$ onto
the subspace spanned by $\{e_1,\cdots, e_m\}$, i. e.,
\[
\mathcal{P}_m\varphi=\sum_{j=1}^m(\varphi,e_j)e_j.
\]
From (\ref{regularize3}), we have
\begin{equation}\label{regularize14}
\partial_t(u_m'-\varepsilon\mathcal{P}_m M(t))=\Delta u_m-\mathcal{P}_m
g_\lambda(u_m')+\mathcal{P}_mf_N(u_m)
\end{equation}
in the sense of distributions over $(0,T)\times D$ for almost all
$\omega$, where $M(t)$ is defined by (\ref{condition2}) with
(\ref{regularize2}). Since $\sigma(x,t)$ is $H_0^1(D)\cap
L^\infty(D)$-valued progressively measurable and
$\{W(t,x):t\geq0\}$ is a $V$-valued process, there is a subset
$\Omega_1\subset \Omega$ with $P(\Omega\setminus\Omega_1)=0$ such
that for each $\omega\in \Omega_1$,
\begin{equation}\label{regularize15}
M\in C([0,T];H_0^1(D)),\ \rm{and \ (\ref{regularize14})\ holds \
for \ all }\ m\geq1.
\end{equation}
From (\ref{regularize11}), for each $\omega\in \Omega_1$ there is
a subsequence $\{u_{m_k}\}_{k=1}^\infty$ such that
\begin{equation}\label{regularize16}
\mathcal{A}_\lambda(u_{m_k})\leq L_\omega,\ \rm{for\ all }\ k\geq1
\ \rm{and\ for \ some\ constant }\ L_\omega>0,
\end{equation}
\begin{equation}\label{regularize17}
u_{m_k}\rightarrow u\ \ \ \rm{weak \ star\ in}\ L^\infty(0,T;
H_0^1(D)\cap H^2(D)),
\end{equation}
\begin{equation}\label{regularize18}
u_{m_k}\rightarrow u\ \ \ \rm{strongly \ in  }\ C([0,T];
H_0^1(D)),
\end{equation}
and
\begin{equation}\label{regularize19}
u_{m_k}'\rightarrow u'\ \ \ \rm{weak \ star\ in}\ L^\infty(0,T;
H_0^1(D)),
\end{equation}
for some function $u=u(\omega)$. It follows from (\ref{local})
that
\[
 |g_\lambda(x)|^{\frac{q}{q-1}}\leq C g_\lambda(x)x,\ \ \ \rm{for \ all
 }\ x \in \mathbb{R} \ \rm{and} \ \lambda>0.
\]
From (\ref{condition}), we have the embedding
$L^{\frac{q}{q-1}}\subset H^{-1}(D)$. Thus, by
(\ref{regularize12}) and (\ref{regularize16}), we have
\begin{equation}\label{regulariz21}
 ||g_\lambda(u_{m_k}')||^{\frac{q}{q-1}}_{L^{\frac{q}{q-1}}(0,T;
 H^{-1}(D))}\leq C L_\omega,
\end{equation}
which combined with (\ref{regularize14}), yields
\begin{equation}\label{regularize20}
||u_{m_k}'-\varepsilon\mathcal{P}_{m_k}M||_{W^{1,\frac{q}{q-1}}(0,T;
 H^{-1}(D))}\leq C L_\omega
\end{equation}
for all $k\leq1$. By (\ref{regularize19}) and
(\ref{regularize20}), we get
\begin{equation}\label{regularize21}
u_{m_k}'-\varepsilon\mathcal{P}_{m_k}M\rightarrow u'-\varepsilon
M\ \ \ \rm{strongly\ in}\ C([0,T];L^2(D)).
\end{equation}
This implies that there is a subsequence still denoted by
$\{u_{m_k}\}$ such that
\begin{equation}\label{regulariz20}
u_{m_k}'(t,x)\rightarrow u'(t,x),\ \ \ \rm{for\ almost\ all}\
(t,x)\in (0,T)\times D.
\end{equation}
It follows from (\ref{regulariz21}), (\ref{regulariz20}) and Lemma
\ref{lem:regularize1} that
\[
g_\lambda(u_{m_k}')\rightarrow g_\lambda(u')\ \ \ \rm{weakly\ in}\
L^{\frac{q}{q-1}}((0,T)\times D).
\]
Thus, $u=u(\omega)$ satisfies (\ref{regularize}) in the sense of
distributions over $(0,T)\times D$. Here the choice of the above
subsequence may depend on $\omega\in \Omega_1$. If there is
another subsequence which converges to
$\widetilde{u}=\widetilde{u}(\omega)$ in the above sense, then
$w=u(\omega)-\widetilde{u}(\omega)$ satisfies
\[
w''-\Delta w
+g_\lambda(u'(\omega))-g_\lambda(\widetilde{u}'(\omega))=f_N(u(\omega))-f_N(\widetilde{u}(\omega)),
\]
\[
w(0)=0,\ \ \ w'(0)=0,
\]
\[
w\in L^\infty (0,T;H_0^1(D)\cap H^2(D))\cap C([0,T);H_0^1(D)),
\]
\[
w'\in L^\infty (0,T;H_0^1(D))\cap C([0,T);L^2(D)).
\]
Thus, we have
\begin{equation}\label{regularize22}
\frac{1}{2}\frac{d}{dt}(||w'(t)||^2_2+||\nabla w(t)||_2^2)+\int_D
\big(g_\lambda(u')-g_\lambda(\widetilde{u}')\big)w'dx=\int_D\big(f_N(u)-f_N(\widetilde{u})\big)w'dx.
\end{equation}
From (\ref{local}), we get
\[
\int_D
g_\lambda(u'(\omega))-g_\lambda(\widetilde{u}'(\omega))w'dx\geq0.
\]
By the H\"{o}lder inequality, it follows from (\ref{condition})
that
\begin{eqnarray}\label{regularize23}
  &&\left|\int_D(f_N(u)-f_N(\widetilde{u})\big)w'dx\right|=
  \left|\int_D\big(\chi_N(||\nabla u||_2)|u|^{p-2}u-
  \chi_N(||\nabla \widetilde{u}||_2)|\widetilde{u}|^{p-2}\widetilde{u}\big)w'dx\right|\nonumber\\
  &&\leq C_N(p-1)\int_D \sup \{|u|^{p-2},|\widetilde{u}|^{p-2}\}|w||w'|dx
  \leq C_N(||u||_{(p-2)d}^{(p-2)}+||\widetilde{u}||_{(p-2)d}^{(p-2)})||w||_{\frac{2d}{d-2}}
  ||w'||_2\nonumber\\
  &&\leq C_N ||\nabla w(t)||_2 ||w'||_2.
\end{eqnarray}
Combining with (\ref{regularize22}) with (\ref{regularize23}), we
have
\[
||w'(t)||^2_2+||\nabla w(t)||_2^2\leq 2C_N\int_0^t
(||w'(s)||^2_2+||\nabla w(s)||_2^2 )ds,
\]
which implies $w=0$, i.e., $u(\omega)=\widetilde{u}(\omega)$.
Hence, for each $\omega\in \Omega_1$, $u=u(\omega)$ is
well-defined.

We shall also show that $(u,u_t)$ is $(H_0^1(D)\cap H^2(D))\times
H^1_0(D)$-valued progressively measurable for any $0\leq t\leq T$.
Let $\textbf{B}_r(z)$ be a closed ball in $C([0,T];H_0^1(D)\times
L^2(D))$ with radius $r>0$ and center at $z$. Then by virtue of
the way $u$ has been obtained, it holds that
\begin{eqnarray}\label{regularize24}
 \{(u,u_t)\in
 \textbf{B}_r(z)\}\cap\Omega_1=\Omega_1\cap
 \bigcup_{L=1}^\infty\bigcap_{\nu=1}^\infty\bigcap_{j=1}^\infty\bigcup_{m=j}^\infty
 \left\{\big((u_m,u_m')\in \textbf{B}_{r+1/\nu}(z)\big)\cap\big(\mathcal{A}_\lambda(u_{m})\leq
 L\big)\right\}.
\end{eqnarray}
Since $(u,u_t)\in C([0,T];H_0^1(D)\times L^2(D))$ for almost all
$\omega$, and the right- hand side of (\ref{regularize24}) belongs
to $\mathcal{F}_T$, it holds that
\begin{eqnarray}\label{regularize25}
 \big\{(t,\omega)|0\leq t\leq T, (u(t,\omega),u_t(t,\omega))\in
 A\big\}\in \mathcal{B}([0,T])\otimes \mathcal{F}_T,
\end{eqnarray}
for every $A\in \mathcal{B}(H_0^1(D)\times L^2(D))$. Since every
closed ball of finite radius in $(H_0^1(D)\cap H^2(D))\times
H^1_0(D)$ is closed in $ H^1_0(D)\times L^2(D)$, we have
$\mathcal{B}((H_0^1(D)\cap H^2(D))\times H^1_0(D))\subset
\mathcal{B}(H^1_0(D)\times L^2(D))$. Thus, (\ref{regularize25})
holds for every $\mathcal{B}((H_0^1(D)\cap H^2(D))\times
H^1_0(D))$. By the pathwise uniqueness, we may replace $T$ in
(\ref{regularize25}) by any $0\leq t\leq T$ and $(u,u_t)$ is
$(H_0^1(D)\cap H^2(D))\times H^1_0(D)$-valued progressively
measurable.

Next we show that for each $\omega\in\Omega_1$,
\begin{eqnarray}\label{regularize26}
\mathcal{A}_\lambda(u)\wedge K\leq
\underline{\lim}_{m\rightarrow\infty}\mathcal{A}_\lambda(u_m)\wedge
K
\end{eqnarray}
for each $K>0$. If
$\underline{\lim}_{m\rightarrow\infty}\mathcal{A}_\lambda(u_m)\wedge
K=K$, then the inequality is obvious. If
$\underline{\lim}_{m\rightarrow\infty}\mathcal{A}_\lambda(u_m)\wedge
K=\delta<K$, then there is a subsequence
$\{u_{m_k}\}_{k=1}^\infty$ such that
\[
\lim_{k\rightarrow\infty}\mathcal{A}_\lambda(u_{m_k})=\delta,
\]
and $\{u_{m_k}(\omega)\}$ converges to $u(\omega)$ in the sense of
(\ref{regularize16})-(\ref{regularize19}) and
(\ref{regularize21}).  It follows that
\[
 ||u||_{L^\infty(0,T;H_0^1(D)\cap H^2(D))}\leq
 \underline{\lim}_{k\rightarrow\infty}||u_{m_k}||_{L^\infty(0,T;H_0^1(D)\cap
 H^2(D))},
\]
\[
 ||u'||_{L^\infty(0,T;H_0^1(D))}\leq
 \underline{\lim}_{k\rightarrow\infty}||u_{m_k}'||_{L^\infty(0,T;H_0^1(D))},
\]
and
\[
\int_0^T\int_D g_\lambda
 \big(u'(s)\big)u'(s)dxds\leq \underline{\lim}_{k\rightarrow\infty}\int_0^T\int_D g_\lambda
 \big(u_{m_k}'(s)\big)u_{m_k}'(s)dxds,
\]
which yield
\[
\mathcal{A}_\lambda(u)\leq\delta.
\]
Thus, (\ref{regularize26}) is valid. By (\ref{regularize11}),
(\ref{regularize26}) and Fatou's lemma, we have
\[
\textbf{E}(\mathcal{A}_\lambda(u)\wedge K)\leq C_N,
\]
for some constant $C_N$ independent of $K$ and $\lambda$. By
passing $K\uparrow \infty$, we get
\begin{equation}\label{vary0}
\textbf{E}(\mathcal{A}_\lambda(u))\leq C_N.
\end{equation}
\end{proof}

Next we still fix $N>0$ and consider the following equation
\begin{eqnarray}\label{vary}
  \begin{cases}
     u_{tt}-\Delta u+g(u_t)=f_N(u)+\varepsilon \sigma (x,t)\partial_t W(t,x),\ \ \ &(x,t)\in D \times (0,T),\\
    u(x,t)=0,            &(x,t)\in\partial D\times (0,T),\\
    u(x,0)=u_{0}(x),\ \ \ u_t(x,0)=u_1(x),\ \ \ &x\in D.
\end{cases}
\end{eqnarray}

\begin{lem}\label{lem:vary}
Assume (\ref{condition}), (\ref{regularize1}) and
(\ref{regularize2}) hold. Then there is a pathwise unique solution
$u$ of (\ref{vary}) such that
\[
 u\in L^2\big(\Omega; L^\infty (0,T;H_0^1(D)\cap H^2(D))\big)\cap
 L^2\big(\Omega; C([0,T];H_0^1(D))\big),
\]
\[
 u_t\in L^2\big(\Omega; L^\infty (0,T;H_0^1(D))\big)\cap
 L^2\big(\Omega; C([0,T];L^2(D))\big),
\]
and
\[
u_t\in L^q((0,T)\times D).
\]
\end{lem}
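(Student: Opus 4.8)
The plan is to build $u$ as the limit, as $\lambda\downarrow 0$, of the solutions $u_\lambda$ of the regularized problem (\ref{regularize}) furnished by Lemma~\ref{lem:regularize2}. The key point is that in the estimate of that lemma,
\[
\textbf{E}\Big(\|u_\lambda'\|^2_{L^\infty(0,T;H_0^1(D))}+\|u_\lambda\|^2_{L^\infty(0,T;H_0^1(D)\cap H^2(D))}+\int_0^T\int_D g_\lambda(u_\lambda')u_\lambda'\,dx\,dt\Big)\le C_N,
\]
the constant $C_N$ is independent of $\lambda$. Since (\ref{local}) gives $|g_\lambda(x)|^{q/(q-1)}\le C\,g_\lambda(x)x$, the last term bounds $\|g_\lambda(u_\lambda')\|_{L^{q/(q-1)}((0,T)\times D)}$ uniformly in $\lambda$; together with the embedding $L^{q/(q-1)}(D)\subset H^{-1}(D)$ from (\ref{condition}) and the distributional identity $\partial_t(u_\lambda'-\varepsilon M)=\Delta u_\lambda-g_\lambda(u_\lambda')+f_N(u_\lambda)$ satisfied by $u_\lambda$, this yields a bound for $\partial_t(u_\lambda'-\varepsilon M)$ in $L^{q/(q-1)}(0,T;H^{-1}(D))$ that is also uniform in $\lambda$. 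Arguing pathwise on the full-measure set $\Omega_1$ where $M\in C([0,T];H_0^1(D))$, for a.e.\ fixed $\omega$ I extract a sequence $\lambda_k=\lambda_k(\omega)\downarrow 0$ along which $u_{\lambda_k}\rightharpoonup u$ weak-$*$ in $L^\infty(0,T;H_0^1\cap H^2)$, $u_{\lambda_k}'\rightharpoonup u'$ weak-$*$ in $L^\infty(0,T;H_0^1)$, $u_{\lambda_k}\to u$ strongly in $C([0,T];H_0^1(D))$, and, by the same compactness argument used for (\ref{regularize19})--(\ref{regularize21}) applied now to $u_{\lambda_k}'-\varepsilon M$, $u_{\lambda_k}'\to u'$ strongly in $C([0,T];L^2(D))$; passing to a further subsequence, $u_{\lambda_k}'(t,x)\to u'(t,x)$ for a.e.\ $(t,x)\in(0,T)\times D$.

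Next I identify the limit of the nonlinear damping. The a.e.\ convergence $u_{\lambda_k}'(t,x)\to u'(t,x)$, the fact that $\lambda_k\to0$, and Lemma~\ref{lem:regularize} give $g_{\lambda_k}(u_{\lambda_k}'(t,x))\to g(u'(t,x))$ for a.e.\ $(t,x)$; since $\{g_{\lambda_k}(u_{\lambda_k}')\}$ is bounded in $L^{q/(q-1)}((0,T)\times D)$, Lemma~\ref{lem:regularize1} upgrades this to $g_{\lambda_k}(u_{\lambda_k}')\rightharpoonup g(u')$ weakly in $L^{q/(q-1)}((0,T)\times D)$. In particular $g(u')=|u'|^{q-2}u'\in L^{q/(q-1)}((0,T)\times D)$, i.e.\ $u_t=u'\in L^q((0,T)\times D)$, which is the assertion of the lemma not already covered by Lemma~\ref{lem:regularize2}; the remaining regularity classes for $u$ and $u_t$ follow from weak lower semicontinuity and Fatou's lemma in $\omega$, exactly as in (\ref{regularize26})--(\ref{vary0}). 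Passing to the limit $k\to\infty$ in $\partial_t(u_{\lambda_k}'-\varepsilon M)=\Delta u_{\lambda_k}-g_{\lambda_k}(u_{\lambda_k}')+f_N(u_{\lambda_k})$ — the terms $\Delta u_{\lambda_k}$ and $f_N(u_{\lambda_k})$ converging by the strong convergence in $C([0,T];H_0^1)$ and (\ref{condition6}) — shows that $u=u(\omega)$ solves (\ref{vary}) in the sense of distributions over $(0,T)\times D$ for each $\omega\in\Omega_1$. Progressive measurability of $(u,u_t)$ is obtained exactly as in Lemma~\ref{lem:regularize2}, with the functional $\mathcal{A}_\lambda(u_m)$ there replaced by its analogue for $u_\lambda$, using the pathwise uniqueness below to dispose of the $\omega$-dependence of the chosen subsequence.

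For pathwise uniqueness, let $u$ and $\widetilde u$ be two solutions with the stated regularity and put $w=u-\widetilde u$. Since the noise is additive it cancels in the difference, so for a.e.\ $\omega$,
\[
w''-\Delta w+g(u')-g(\widetilde u')=f_N(u)-f_N(\widetilde u),\qquad w(0)=0,\ \ w'(0)=0,
\]
with $w\in L^\infty(0,T;H_0^1\cap H^2)\cap C([0,T];H_0^1)$ and $w'\in L^\infty(0,T;H_0^1)\cap C([0,T];L^2)\cap L^q((0,T)\times D)$. Testing with $w'$ — which is admissible because $g(u')-g(\widetilde u')\in L^{q/(q-1)}((0,T)\times D)$ pairs with $w'\in L^q((0,T)\times D)$ — gives
\[
\frac12\frac{d}{dt}\big(\|w'(t)\|_2^2+\|\nabla w(t)\|_2^2\big)+\int_D\big(g(u')-g(\widetilde u')\big)w'\,dx=\int_D\big(f_N(u)-f_N(\widetilde u)\big)w'\,dx.
\]
The second term on the left is nonnegative by the monotonicity of $g$, while the right-hand side is bounded by $C_N\|\nabla w(t)\|_2\|w'(t)\|_2$ exactly as in (\ref{regularize23}); Gronwall's inequality then forces $w\equiv0$.

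The step I expect to be the main difficulty is the passage to the limit in the monotone term $g_{\lambda_k}(u_{\lambda_k}')$: it requires first improving the weak-$*$ convergence of $u_{\lambda_k}'$ to a.e.\ convergence through the Aubin--Lions compactness for $u_{\lambda_k}'-\varepsilon M$ — which is precisely where the $\lambda$-uniform $L^{q/(q-1)}(0,T;H^{-1})$ bound, hence the cutoff $f_N$, enters — and then combining that a.e.\ convergence with Lemma~\ref{lem:regularize} and Lemma~\ref{lem:regularize1}; this is the juncture at which both the maximal monotonicity of $g$ and the $\lambda$-independence of the energy estimate of Lemma~\ref{lem:regularize2} are indispensable.
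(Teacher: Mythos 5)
Your proposal is correct and follows essentially the same route as the paper: take $\lambda=\lambda_k\downarrow 0$, use the $\lambda$-independence of the constant $C_N$ in Lemma~\ref{lem:regularize2} together with the bound $|g_\lambda(x)|^{q/(q-1)}\le C\,g_\lambda(x)x$ and the embedding $L^{q/(q-1)}(D)\subset H^{-1}(D)$ to get pathwise compactness of $u_{\lambda_k}'-\varepsilon M$ in $C([0,T];L^2(D))$, then identify the limit of the monotone term via the a.e.\ convergence of the Yosida approximations (Lemma~\ref{lem:regularize}) and Lions' lemma (Lemma~\ref{lem:regularize1}), and conclude uniqueness by the monotonicity-plus-Gronwall energy argument. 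This matches the paper's proof step for step, including the treatment of progressive measurability and the lower-semicontinuity/Fatou argument for the final moment bound.
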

\begin{proof}
We denote by $u_\lambda$ the solution of (\ref{regularize}) under
the conditions (\ref{regularize1}) and (\ref{regularize2}). Since
$\textbf{E}(\mathcal{A}_\lambda(u_\lambda))\leq C_N$ for all
$\lambda>0$, we can repeat the same argument as above by
considering $\lambda=\frac{1}{m}$, $m=1,2,\cdots$. there is
$\Omega_2\subset \Omega$ with $P(\Omega\setminus\Omega_2)=0$ and
the following properties. For each $\omega\in\Omega_2$,
\begin{equation}\label{vary1}
M\in C([0,T];H_0^1(D)),\ \rm{and  \ for \ all }\
\lambda=\frac{1}{m},\ m\geq1,
\end{equation}
\[
( u_\lambda'-\varepsilon M(t))'-\Delta u_\lambda+
g_\lambda(u_\lambda')=f_N(u_\lambda)
\]
holds in the sense of distributions over $(0,T)\times D$, and
there is a subsequence satisfying the following.
\begin{equation}\label{vary2}
\mathcal{A}_{\lambda_k}(u_{\lambda_k})\leq L_\omega,\ \rm{for\ all
}\ k\geq1 \ \rm{and\ for \ some\ constant }\ L_\omega>0,
\end{equation}
\begin{equation}\label{vary3}
u_{\lambda_k}\rightarrow u\ \ \ \rm{weak \ star\ in}\
L^\infty(0,T; H_0^1(D)\cap H^2(D)),
\end{equation}
\begin{equation}\label{vary4}
u_{\lambda_k}\rightarrow u\ \ \ \rm{strongly \ in  }\ C([0,T];
H_0^1(D)),
\end{equation}
\begin{equation}\label{vary5}
 u_{\lambda_k}'\rightarrow u'\ \ \ \rm{weak \ star\ in}\
L^\infty(0,T; H_0^1(D)),
\end{equation}
\begin{equation}\label{vary5}
u_{\lambda_k}'\rightarrow u'\ \ \ \rm{strongly \ in  }\ C([0,T];
L^2(D)),
\end{equation}
and
\begin{equation}\label{vary5}
 u_{\lambda_k}'\rightarrow u'\ \ \ \rm{for \ almost\ all  }\ (x,t)\in (0,T)\times
D,
\end{equation}
for some function $u=u(\omega)$. By Lemma \ref{lem:regularize},
\[
g_{\lambda_k}(u_{\lambda_k}')\rightarrow g( u')\ \ \ \rm{for \
almost\ all  }\ (x,t)\in (0,T)\times D.
\]
It follows from (\ref{vary2}) and Lemma \ref{lem:regularize1} that
\[
g_{\lambda_k}(u_{\lambda_k}')\rightarrow g(u')\ \ \ \rm{weakly\
in}\ L^{\frac{q}{q-1}}((0,T)\times D).
\]
Thus, $u=u(\omega)$ satisfies (\ref{vary}) in the sense of
distributions over $(0,T)\times D$ for $\omega\in\Omega$.  Suppose
that for $\omega\in \Omega$, there is another subsequence which
converges to $\widetilde{u}=\widetilde{u}(\omega)$ in the sense of
(\ref{vary2})-(\ref{vary5}). Similarly the proof in Lemma
\ref{lem:regularize2}, we can show that
$u(\omega)=\widetilde{u}(\omega)$ follows from the equation
\[
u_{tt}(\omega)-\widetilde{u}_{tt}(\omega)-\Delta
(u(\omega)-\widetilde{u}(\omega))
+g(u_t(\omega))-g(\widetilde{u}_t(\omega))=f_N(u(\omega))-f_N(\widetilde{u}(\omega)),
\]
and the regularity
\[
u(\omega), \widetilde{u}(\omega)\in L^\infty (0,T;H_0^1(D)\cap
H^2(D))\cap C([0,T);H_0^1(D)),
\]
\[
u_t(\omega), \widetilde{u}_t(\omega)\in L^\infty
(0,T;H_0^1(D))\cap C([0,T);L^2(D)),
\]
\[
g(u_t(\omega)), g(\widetilde{u}_t(\omega))\in
L^{\frac{q}{q-1}}((0,T)\times D).
\]

Again by the same argument as Lemma \ref{lem:regularize2},
$(u,u_t)$ is $(H_0^1(D)\cap H^2(D))\times H_0^1(D)$-valued
progressively measurable. Next we define
\begin{equation}\label{vary6}
 \mathcal{A}(u)=||u||^2_{L^\infty(0,T;H^1_0(D)\cap
 H^2(D))}+|| u_t||^2_{L^\infty(0,T;H^1_0(D))}+\int_0^T\int_D g_\lambda
 (u_t)u_tdxdt.
\end{equation}
Then by the same argument as (\ref{vary0}), we have
\begin{equation}\label{vary7}
\textbf{E}\big(\mathcal{A}(u)\big)\leq C_N.
\end{equation}
\end{proof}

Now we consider the local existence and uniqueness of solution for
problem (\ref{smain}) under the assumption (\ref{mild}).
\begin{thm}\label{thm local}
Under the assumptions (\ref{condition}), (\ref{mild}) and
(\ref{mild1}), there is a pathwise unique local solution $u$ of
(\ref{smain}) according to Definition \ref{defn} such that the
energy equation holds:
\begin{eqnarray}\label{ienery}
&&||\nabla u(t)||_2^2+||u_t(t)||^2_2+2\int_0^t\int_D |u_t(s)|^qdxds-2\int_0^t\int_D |u(s)|^{p-2}u(s)u_t(s)dxds\nonumber\\
  &&=||\nabla u_0||^2_2+||u_1||^2_2+2\int_0^t\big(u_t(s),\varepsilon\sigma(x,s)\big)dW_s+\varepsilon^2\sum_{i=1}^\infty\int_0^t
  \int_D \lambda_ie_i^2(x)\sigma^2(x,s)dxds.
\end{eqnarray}
\end{thm}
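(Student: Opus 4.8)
\emph{Strategy.} The plan is to derive Theorem~\ref{thm local} from Lemma~\ref{lem:vary} in two stages. First, keeping the truncation $f_N$, one relaxes the strong hypotheses (\ref{regularize1})--(\ref{regularize2}) on the data down to (\ref{mild})--(\ref{mild1}) by a density/stability argument. Second, one removes the truncation via a stopping-time argument based on the level at which $||\nabla u||_2$ reaches $N$. The energy identity (\ref{ienery}) is then read off from It\^o's formula applied to $||u_t||_2^2+||\nabla u||_2^2$ for the truncated solutions and passing to the limit.

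\emph{Step 1: relaxing the data.} Pick $(u_0^k,u_1^k)\in (H_0^1(D)\cap H^2(D))\times H_0^1(D)$ converging to $(u_0,u_1)$ in $H_0^1(D)\times L^2(D)$, and set $\sigma^k=\mathcal{P}_k\sigma$, which is $H_0^1(D)\cap L^\infty(D)$-valued progressively measurable, satisfies (\ref{regularize2}), and $\sigma^k\to\sigma$ in $L^2(\Omega\times(0,T);L^2(D))$. Let $u^k$ be the solution of (\ref{vary}) with data $(u_0^k,u_1^k,\sigma^k)$ furnished by Lemma~\ref{lem:vary}. Applying It\^o's formula to $||u^k_t||_2^2+||\nabla u^k||_2^2$, and using $\int_D f_N(u^k)u^k_t\,dx\le C_N||\nabla u^k||_2||u^k_t||_2$ as in (\ref{regularize6}), $\int_D g(u^k_t)u^k_t\,dx=||u^k_t||_q^q\ge0$, the Burkholder--Davis--Gundy inequality as in (\ref{regularize9}), and Gronwall's inequality, one gets
\[
\textbf{E}\Big(\sup_{t\in[0,T]}\big(||u^k_t||_2^2+||\nabla u^k||_2^2\big)+\int_0^T\int_D|u^k_t|^q\,dxdt\Big)\le C_N\Big(||u_0^k||_{H_0^1(D)}^2+||u_1^k||_2^2+\textbf{E}\int_0^T||\sigma^k||_2^2\,dt\Big),
\]
which, crucially, involves only the $L^2(D)$-norm of $\sigma^k$ and is therefore uniform in $k$. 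For the difference $w=u^k-u^j$ the monotonicity of $g$ gives $\big(g(u^k_t)-g(u^j_t),u^k_t-u^j_t\big)\ge0$ and $f_N$ is Lipschitz by (\ref{condition6}), so the same It\^o computation and Gronwall yield
\[
\textbf{E}\sup_{t\in[0,T]}\big(||w_t||_2^2+||\nabla w||_2^2\big)\le C_N\Big(||u_0^k-u_0^j||_{H_0^1(D)}^2+||u_1^k-u_1^j||_2^2+\textbf{E}\int_0^T||\sigma^k-\sigma^j||_2^2\,dt\Big)\to0.
\]
Hence $(u^k,u^k_t)\to(u^N,u^N_t)$ in $L^2(\Omega;C([0,T];H_0^1(D)\times L^2(D)))$; along a subsequence $u^k_t\to u^N_t$ a.e.\ on $\Omega\times(0,T)\times D$, so $g(u^k_t)\to g(u^N_t)$ a.e., and combined with the uniform $L^{q/(q-1)}((0,T)\times D)$ bound on $g(u^k_t)$ (from $||u^k_t||_q^q$) and Lemma~\ref{lem:regularize1}, $g(u^k_t)\to g(u^N_t)$ weakly in $L^{q/(q-1)}((0,T)\times D)$; moreover $f_N(u^k)\to f_N(u^N)$ by (\ref{condition6}) and the stochastic integrals converge in $L^2(\Omega)$. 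Therefore $u^N$ solves (\ref{vary}) with data (\ref{mild})--(\ref{mild1}), is progressively measurable with the regularity (\ref{define1}) (including $u^N_t\in L^q((0,T)\times D)$), and, letting $k\to\infty$ in the It\^o identity for $u^k$, satisfies (\ref{ienery}) with $|u^N|^{p-2}u^N$ replaced by $f_N(u^N)$.

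\emph{Step 2: removing the truncation.} Fix $N>||\nabla u_0||_2$ and put $\tau_N=\inf\{t\in[0,T]:||\nabla u^N(t)||_2\ge N\}\wedge T$, a stopping time which is positive a.s.\ by continuity of $t\mapsto||\nabla u^N(t)||_2$. On $[0,\tau_N]$ one has $\chi_N(||\nabla u^N||_2)\equiv1$, so $f_N(u^N)=f(u^N)$ there and $u^N$ solves (\ref{smain}) on $[0,\tau_N]$. For $M>N$, on $[0,\tau_N]$ both $u^N$ and $u^M$ solve (\ref{vary}) at truncation level $M$ (since $||\nabla u^N||_2\le N<M$), so by the pathwise uniqueness of Lemma~\ref{lem:vary} together with Step~1, $u^M=u^N$ on $[0,\tau_N]$; hence $\tau_N\le\tau_M$, and we may consistently define $\tau=\lim_{N\to\infty}\tau_N>0$ and $u=u^N$ on $[0,\tau_N]$. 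Then $u$ is the claimed local solution: the measurability, regularity, initial data and equation (\ref{define})--(\ref{define3}) hold on $[0,\tau_N]$ for all $N$; pathwise uniqueness follows by stopping any two local solutions at the first time either gradient reaches level $N$, applying Lemma~\ref{lem:vary}, and letting $N\to\infty$; and restricting the identity of Step~1 to $[0,\tau_N]$ gives exactly (\ref{ienery}) because $\chi_N\equiv1$ there, after identifying the It\^o correction term with $\varepsilon^2\sum_i\lambda_i\int_D e_i^2(x)\sigma^2(x,s)\,dx$ and using $\int_D g(u_t)u_t\,dx=\int_D|u_t|^q\,dx$.

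\emph{Main obstacle.} The delicate point is Step~1: since $g(x)=|x|^{q-2}x$ is merely continuous with superlinear growth, passing to the limit in $g(u^k_t)$ requires \emph{strong} $L^2$-convergence of the velocities, which is precisely what the monotonicity of $g$ buys through the difference estimate for $w=u^k-u^j$; and the a priori bounds must be made uniform in $k$ using only $||\sigma^k||_2$, so that the relaxation of (\ref{regularize2}) to (\ref{mild1}) is legitimate (hence the estimate is written with $||\sigma^k||_2$ only). Everything else — the It\^o formula for $||u_t||_2^2$, identification of the trace term, and $\int_D g(u_t)u_t\,dx=||u_t||_q^q$ — is routine once these convergences are in hand.
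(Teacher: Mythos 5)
Your proposal is correct and follows essentially the same route as the paper: approximate the data by smooth data, invoke the solvability of the truncated problem (Lemma~\ref{lem:vary}), run a Cauchy/Gronwall argument on differences using the monotonicity of $g$ and the Lipschitz bound (\ref{condition6}) for $f_N$, pass to the limit in the energy identity, and finally remove the truncation with the stopping times $\tau_N=\inf\{t:||\nabla u^N(t)||_2\ge N\}$. The only cosmetic difference is that the paper uses the quantitative inequality $(|a|^{q-2}a-|b|^{q-2}b)(a-b)\ge c|a-b|^q$ to get the velocities Cauchy in $L^q((0,T)\times D)$ and hence a.e.\ convergent, whereas you extract a.e.\ convergence from the strong $L^2$ convergence; both suffice to identify $g(u_t)$ via Lemma~\ref{lem:regularize1}.
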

\begin{proof}
Let us choose sequences $\{u_{0,m}\}$, $\{u_{1,m}\}$ and
$\{\sigma_m(x,t,\omega)\}$ such that
\[
 u_{0,m}\in H^1_0(D)\cap H^2(D),\ \ \ u_{1,m}\in H^1_0(D),\ \ \ \sigma_m(x,t,\omega)\in L^2(\Omega;L^2(0,T;H^1_0(D)\cap L^\infty(D)))
\]
\[
\textbf{E}\int_0^T(||\nabla \sigma_m(t)||_2^2+||\sigma_m
(t)||_\infty^2)dt<\infty.,
\]
and as $m\rightarrow\infty$,
\begin{equation}\label{exist1}
u_{0,m}\rightarrow u_0\ \ \ \rm{strongly\ in}  \ \ \ H^1_0(D),
\end{equation}
\begin{equation}\label{exist2}
u_{1,m}\rightarrow u_1\ \ \ \rm{strongly\ in}  \ \ \ L^2(D),
\end{equation}
\begin{equation}\label{exist3}
\textbf{E}\int_0^T ||\sigma_m(x,t)-\sigma(x,t)||_2^2 dt\rightarrow
0.
\end{equation}
For each $m\geq1$, let $u_m$ be the solution of
\begin{eqnarray}\label{exist4}
  \begin{cases}
     u_{tt}-\Delta u+g(u_t)=f_N(u)+\varepsilon \sigma_m (x,t)\partial_t W(t,x),\ \ \ &(x,t)\in D \times (0,T),\\
    u(x,t)=0,            &(x,t)\in\partial D\times (0,T),\\
    u(x,0)=u_{0,m}(x),\ \ \ u_t(x,0)=u_{1,m}(x),\ \ \ &x\in D.
\end{cases}
\end{eqnarray}
By Lemma \ref{lem:vary}, we have
\begin{equation}\label{exist5}
 u_m\in L^2\big(\Omega; L^\infty (0,T;H_0^1(D)\cap H^2(D))\big)\cap
 L^2\big(\Omega; C([0,T];H_0^1(D))\big),
\end{equation}
\begin{equation}\label{exist6}
 u_m'\in L^2\big(\Omega; L^\infty (0,T;H_0^1(D))\big)\cap
 L^2\big(\Omega; C([0,T];L^2(D))\big),
\end{equation}
and the energy equation
\begin{eqnarray}\label{ienery1}
&&||\nabla u_m||_2^2+||u_m'||^2_2+2\int_0^t\int_D |u_m'|^qdxds
-2\int_0^t\int_D \chi(||\nabla u_m||_2)|u_m|^{p-2}u_mu_m'(s)dxds\nonumber\\
  &&=||\nabla u_{0,m}||^2_2+||u_{1,m}||^2_2+2\int_0^t\big(u_m',\varepsilon\sigma_m\big)dW_s+\varepsilon^2\sum_{i=1}^\infty\int_0^t
  \int_D \lambda_ie_i^2(x)\sigma_m^2(x,s)dxds.
\end{eqnarray}
Let
\[
M_m(t,x)=\int_0^t \sigma_m(x,s)dW(s,x),\ \ \ t>0,\ x\in D.
\]
Then, for any $m_1$, $m_2$
\begin{equation}\label{exist7}
(u_{m_1}''-u_{m_2}'')-\Delta
(u_{m_1}-u_{m_2})+g(u_{m_1}')-g(u_{m_2}')=f_N(u_{m_1})-f_N(u_{m_2})+\varepsilon(M_{m_1}-M_{m_2})'
\end{equation}
holds in the sense of distributions over $(0,T)\times D$ for
almost all $\omega$. For the damping term, we use the following
elementary inequality
\begin{equation}\label{iexist}
 (|a|^{q-2}a-|b|^{q-2}b)(a-b)\geq c|a-b|^{q}
\end{equation}
for $a,\ b\in \mathbb{R}$, $q\geq2$, where $c$ is a positive
constant. By inequality (\ref{iexist}) and  the regularity
(\ref{exist5}) and (\ref{exist6}), we can drive from
\begin{eqnarray}\label{exist8}
 &&||u_{m_1}'(t)-u_{m_2}'(t)||_2^2+||\nabla u_{m_1}(t)-\nabla u_{m_2}(t)||^2_2+2c\int_0^t
 ||u_{m_1}'-u_{m_2}'||^q_qds\nonumber\\
 &&\leq ||\nabla u_{0,m_1}-\nabla u_{0,m_2}||^2_2+|| u_{1,m_1}-u_{1,m_2}||^2_2+2\int_0^t\big(f_N(u_{m_1})-f_N(u_{m_2}),
 u_{m_1}'-u_{m_2}'\big)ds\nonumber\\
&&\ \ \ \ +2\varepsilon\int_0^t\big(\sigma_{m_1}-\sigma_{m_2},
 u_{m_1}'-u_{m_2}'\big)dW_s+\varepsilon^2 c_0^2Tr R\int_0^t||\sigma_{m_1}-\sigma_{m_2}||_2^2ds
\end{eqnarray}
for all $t\in [0,T]$. For the third term on the right of
(\ref{exist8}), it follows from (\ref{condition6}) that
\begin{eqnarray}\label{exist9}
 &&2\Big|\int_0^t\big(f_N(u_{m_1})-f_N(u_{m_2}),
 u_{m_1}'-u_{m_2}'\big)ds\Big|\leq 2\int_0^t ||f_N(u_{m_1})-f_N(u_{m_2})||_2||u_{m_1}'-u_{m_2}'||_2ds\nonumber\\
 &&\leq 2C_N\int_0^t||\nabla u_{m_1}-\nabla u_{m_2}||_2||u_{m_1}'-u_{m_2}'||_2ds\nonumber\\
 &&\leq C_N\int_0^t||\nabla u_{m_1}-\nabla
 u_{m_2}||_2^2dt+C_N\int_0^t||u_{m_1}'-u_{m_2}'||_2^2ds,
\end{eqnarray}
where $C_N$ ia s positive constant independent of $m_1$ and $m_2$.
In view of (\ref{exist8}) and (\ref{exist9}), it follows that
\begin{eqnarray}\label{exist13}
 &&\textbf{E}\sup_{0\leq t\leq T}\Big(||u_{m_1}'(t)-u_{m_2}'(t)||_2^2+
 ||\nabla u_{m_1}(t)-\nabla u_{m_2}(t)||^2_2\Big) \nonumber\\
&&\leq ||\nabla u_{0,m_1}-\nabla
u_{0,m_2}||^2_2+C_N\int_0^T\textbf{E}\sup_{0\leq t\leq
T}\Big(||\nabla u_{m_1}-\nabla
 u_{m_2}||_2^2+||u_{m_1}'-u_{m_2}'||_2^2\Big)ds\nonumber\\
&&\ \ \ + ||
u_{1,m_1}-u_{1,m_2}||^2_2+\varepsilon^2 c_0^2Tr R\textbf{E}\int_0^t||\sigma_{m_1}-\sigma_{m_2}||_2^2ds\nonumber\\
&&\ \ \ +2\varepsilon\textbf{E}\sup_{0\leq t\leq
T}\Big|\int_0^t\big(\sigma_{m_1}-\sigma_{m_2},
 u_{m_1}'-u_{m_2}'\big)dW_s\Big| .
\end{eqnarray}
For the last term on the right of (\ref{exist13}), by the
Burkholder-Davis-Gundy inequality we have
\begin{eqnarray}\label{exist14}
 &&\textbf{E}\Big(\sup_{t\in[0,T]}\Big|\int_0^t\big(\sigma_{m_1}-\sigma_{m_2},
 u_{m_1}'-u_{m_2}'\big)dW_s\Big|\Big)\nonumber\\
 &&\leq C\textbf{E}\Big(\sup_{t\in[0,T]}||u_{m_1}'-u_{m_2}'||_2\Big(\sum_{i=1}^\infty\int_0^t
 \big((\sigma_{m_1}-\sigma_{m_2})Re_i,(\sigma_{m_1}-\sigma_{m_2})e_i\big)dt
 \Big)^{\frac{1}{2}}\Big)\nonumber\\
 &&\leq
 \alpha\textbf{E}\Big(\sup_{t\in[0,T]}||u_{m_1}'-u_{m_2}'||_2^2\Big)
 +\frac{Cc_0^2}{\alpha}Tr R\textbf{E}\int_0^t||\sigma_{m_1}-\sigma_{m_2}||^2_2dt
\end{eqnarray}
where $\alpha$ and $C$ are some positive constants. By
taking(\ref{exist13}), (\ref{exist14}) into account and invoking
the Gronwall inequality again, we get
\begin{eqnarray}\label{exist13'}
  &&\textbf{E}\Big(\sup_{t\in[0,T]}||u_{m_1}'-u_{m_2}'||_2^2+\sup_{t\in[0,T]}||\nabla u_{m_1}-\nabla
 u_{m_2}||_2^2\Big)\nonumber\\
  &&\leq C_N \Big(||\nabla u_{0,m_1}-\nabla u_{0,m_2}||^2_2+||
  u_{1,m_1}-u_{1,m_2}||^2_2+\varepsilon^2 c_0^2Tr
  R\textbf{E}\int_0^t||\sigma_{m_1}-\sigma_{m_2}||_2^2ds\Big).
\end{eqnarray}
Moreover, it can be derived from (\ref{exist8}) and
(\ref{exist13'}) that
\begin{eqnarray}\label{exist12}
 &&\textbf{E}\Big(\int_0^T
 \sup_{t\in[0,T]}||u_{m_1}'-u_{m_2}'||^q_qdt\Big)\nonumber\\
 &&\leq C_N \Big(||\nabla u_{m_1}(t)-\nabla
 u_{m_2}(t)||^2_2
 +||
u_{1,m_1}-u_{1,m_2}||^2_2 +\varepsilon^2 c_0^2Tr
  R\textbf{E}\int_0^t||\sigma_{m_1}-\sigma_{m_2}||_2^2ds\Big).\quad
\end{eqnarray}
It follows from (\ref{exist1})-(\ref{exist3}) and (\ref{exist13'})
that $\{u_m\}$ and $\{u_m'\}$ are cauchy sequences in
$L^2(\Omega;H_0^1(D))$ and $L^2(\Omega;L^2(D))$, respectively.
Thus,
\begin{equation}\label{exist11}
(u_m, u_m')\rightarrow (u_N,u_N') \ \ \ \rm{stronly \ in }\ \ \
L^2(\Omega;C([0,T];H_0^1(D)\times L^2(D)))
\end{equation}
for some function $u_N$ dependent on $N$. Also, by
(\ref{exist12}), $\{u_m'\}$ are cauchy sequences in
$L^q((0,T)\times D)$. So $\{u_m'\}$ converge strongly in
$L^q((0,T)\times D)$. Then there exist subsequences of $\{u_m'\}$,
still denoted by $\{u_m'\}$ such that
\begin{equation}\label{exist15}
u_m'\rightarrow u_N'\ \ \ \rm {for \ almost\ all}\ (x,t)\in
(0,T)\times D.
\end{equation}
It follows from (\ref{vary7}), (\ref{exist15}) and Lemma
\ref{lem:regularize1} that
\begin{equation}\label{exist16}
|u_m'|^{q-2}u_m'\rightarrow |u_N'|^{q-2}u_N'\ \ \ \rm{weakly\ in}\
L^{\frac{q}{q-1}}((0,T)\times D).
\end{equation}
Therefore, using (\ref{exist11}), (\ref{exist15}), the convergence
of the initial data and $\sigma_m (x,t)$, $u_N$ is the solution of
the following equation
\begin{eqnarray}\label{exist17}
  \begin{cases}
     u_{tt}-\Delta u+|u_t|^{q-2}u_t=f_N(u)+\varepsilon \sigma (x,t)\partial_t W(t,x),\ \ \ &(x,t)\in D \times (0,T),\\
    u(x,t)=0,            &(x,t)\in\partial D\times (0,T),\\
    u(x,0)=u_{0}(x),\ \ \ u_t(x,0)=u_{1}(x),\ \ \ &x\in D,
\end{cases}
\end{eqnarray}
which satisfies the requirements of Definition \ref{defn}, where
$u_0$, $u_1$ and $\sigma (x,t)$ satisfy condition (\ref{mild}).
For uniqueness of (\ref{exist17}), the proof is similar in the
Lemma \ref{lem:regularize2}, so we omit it here.

To obtain the energy equation of (\ref{exist17}), we proceed by
taking the termwise limit in the approximate equation
(\ref{ienery1}). It is ease to show that
\[
 ||\nabla u_m||_2^2\rightarrow ||\nabla u_N||_2^2,\ \ \  || u_m'||_2^2\rightarrow ||
 u_N'||_2^2, \ \ \ ||\nabla u_{0,m}||_2^2\rightarrow ||\nabla
 u_0||_2^2, \ \ \ || u_{1,m}||_2^2\rightarrow ||
 u_1||_2^2
\]
in the mean and
\[
\int_0^t\int_D |u_m'|^{q}\rightarrow \int_0^t\int_D |u_N'|^{q}
\]
by (\ref{exist16}). By the dominated convergence theorem, the term
$\int_0^t
  \int_D \lambda_1e_i^2(x)\sigma_m^2(x,s)dxds$ converges in the mean to
$\int_0^t
  \int_D \lambda_1e_i^2(x)\sigma^2(x,s)dxds$. For the remaining two terms in
(\ref{ienery1}), we first consider
\begin{eqnarray}\label{exist18}
&&\Big|\int_0^t\int_D \chi(||\nabla
u_m||_2)|u_m|^{p-2}u_mu_m'(s)dxds-\int_0^t\int_D \chi(||\nabla
u_N||_2)|u_N|^{p-2}u_Nu_N'(s)dxds\Big|\nonumber\\
&&\leq\int_0^t\Big|\big(f_N(u_m)-f_N(u_N),u_N'\big)\Big|ds
+\int_0^t\Big|\big(f_N(u_m),u_m'-u_N'\big)\Big|ds.
\end{eqnarray}
From (\ref{condition6}) and (\ref{sobolev}), we get
\begin{equation}\label{exist19}
  \Big|\big(f_N(u_m)-f_N(u_N),u_N'\big)\Big|\leq
  ||f_N(u_m)-f_N(u_N)||_2||u_N'||_2\leq C_N ||\nabla u_m-\nabla
  u_N||_2||u_N'||_2,
\end{equation}
and
\begin{equation}\label{exist20}
  \Big|\big(f_N(u_m),u_m'-u_N'\big)\Big|\leq C_N ||\nabla
  u_m||_2||u_m'-u_N'||_2.
\end{equation}
substituting (\ref{exist19}) and (\ref{exist20}) into
(\ref{exist18}), we obtain
\begin{eqnarray*}
&&\Big|\int_0^t\int_D \chi(||\nabla
u_m||_2)|u_m|^{p-2}u_mu_m'(s)dxds-\int_0^t\int_D \chi(||\nabla
u_N||_2)|u_N|^{p-2}u_Nu_N'(s)dxds\Big|\nonumber\\
&&\leq C_N\int_0^t(||\nabla
  u_m||_2+||u_N'||_2)(||\nabla u_m-\nabla u_N||_2+||u_m'-u_N'||_2)ds.
\end{eqnarray*}
Therefore
\begin{eqnarray*}
&&\textbf{E}\Big|\int_0^t\int_D \chi(||\nabla
u_m||_2)|u_m|^{p-2}u_mu_m'(s)dxds-\int_0^t\int_D \chi(||\nabla
u_N||_2)|u_N|^{p-2}u_Nu_N'(s)dxds\Big|^2\nonumber\\
&&\leq 2C_N\Big(\textbf{E}\int_0^T(||\nabla
  u_m||_2^2+||u_N'||_2^2)ds\Big)\Big(\textbf{E}\int_0^T(||\nabla u_m-\nabla
  u_N||_2+||u_m'-u_N'||_2)ds\Big),
\end{eqnarray*}
which converges to zero as $m\rightarrow\infty$. Finally, for the
stochastic integral term, we have
\begin{eqnarray*}
&&\textbf{E}\Big|\int_0^t\big(u_m',\sigma_m\big)dW_s-\int_0^t\big(u_N',\sigma\big)dW_s\big|\\
&&\leq
\textbf{E}\Big|\int_0^t\big(u_m'-u_N',\sigma_m\big)dW_s\big|
+\textbf{E}\Big|\int_0^t\big(u_N',\sigma_m-\sigma\big)dW_s\big|.
\end{eqnarray*}
Now, by the Burkholder-Davis-Gundy inequality, we have
\begin{eqnarray*}
&& \textbf{E}\sup_{0\leq t\leq
T}\Big|\int_0^t\big(u_m'-u_N',\sigma_m\big)dW_s\big|\leq
C\textbf{E}\Big(\sup_{0\leq t\leq
T}||u_m'-u_N'||_2\Big(\sum_{i=1}^\infty\int^T_0(\sigma_{m}R e_i,\sigma_{m}e_i)dt\Big)^{\frac{1}{2}}\Big)\quad\\
&&\leq Cc_0Tr R\textbf{E}\Big(\sup_{0\leq t\leq
T}||u_m'-u_N'||_2^2\Big)^{\frac{1}{2}}\textbf{E}
\Big(\int^T_0||\sigma_{m}||^2_2dt\Big)^{\frac{1}{2}}\rightarrow0,\
\ \ \rm{as}\ m\rightarrow\infty.
\end{eqnarray*}
Similarly,
\[
 \textbf{E}\sup_{0\leq t\leq
T}\Big|\int_0^t\big(u_N',\sigma_m-\sigma\big)dW_s\big| \leq Cc_0Tr
R\textbf{E}\Big(\sup_{0\leq t\leq
T}||u_N'||_2^2\Big)^{\frac{1}{2}}\textbf{E}
\Big(\int^T_0||\sigma_m-\sigma||_2^2dt\Big)^{\frac{1}{2}},
\]
which also tends to zero as $m\rightarrow0$ by (\ref{exist3}).
There above three inequalities imply that
\[
\int_0^t\big(u_m',\sigma_m\big)dW_s\rightarrow\int_0^t\big(u_N',\sigma\big)dW_s,\
\ \ \rm{as}\ m\rightarrow\infty.
\]
Hence, we obtain the energy equation of (\ref{exist17})
\begin{eqnarray}\label{ienery2}
&&||\nabla u_N||_2^2+||u_N'||^2_2+2\int_0^t\int_D |u_N'|^qdxds
-2\int_0^t\int_D \chi(||\nabla u_N||_2)|u_N|^{p-2}u_Nu_N'(s)dxds\nonumber\\
  &&=||\nabla u_{0}||^2_2+||u_{1}||^2_2+2\int_0^t\big(u_N',\varepsilon\sigma\big)dW_s+\varepsilon^2\sum_{i=1}^\infty\int_0^t
  \int_D \lambda_1e_i^2(x)\sigma^2(x,s)dxds.
\end{eqnarray}

For each $N$, introduce the stopping time $\tau_N$ by
\[
\tau_N=\inf\{t>0;||\nabla u_N||_2\geq N\}.
\]
By the uniqueness of the solution of (\ref{exist17}), for
$t\in[0,\tau_N\wedge T)$, $u(t)=u_N(t)$ is the local solution of
(\ref{smain}). As $\tau_N$ is increasing in $N$, let
$\tau_\infty=\lim_{N\rightarrow\infty}\tau_N$. Hence, we construct
a unique continuous local solution $u(t)=\lim_{N\rightarrow\infty}
u_N(t)$ to (\ref{smain}) on $[0,T\wedge\tau_\infty)$, which
satisfies the requirements of Definition \ref{defn} and the energy
equation (\ref{ienery}).
\end{proof}

To obtain a global solution, it is necessary to consider the
interaction between the damping term $|u_t|^{q-2}u_t$ and the
source term $|u|^{p-2}u$ such that a certain energy bound can be
established to prevent the unlimited growth. To state the next
theorem, we define
\[
e\big(u(t)\big)=||u_t(t)||_2^2+||\nabla
u(t)||_2^2+\frac{2}{p}||u||_p^p.
\]
\begin{thm}\label{thm global}
Suppose (\ref{condition}),  (\ref{mild}) and (\ref{mild1}) hold.
If $q\geq p$, then for any $T>0$, there is a unique solution $u$
of (\ref{smain}) according to Definition \ref{defn} on the
interval $[0,T]$ such that
\begin{equation}\label{global0}
 \textbf{E}\sup_{0\leq t\leq T}e(t)<\infty.
\end{equation}
\end{thm}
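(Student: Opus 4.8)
The plan is to promote the energy equation (\ref{ienery}) to an \emph{a priori} bound $\textbf{E}\sup_{0\le s\le t}e(u(s))\le C(T)$ valid for every $t\le T$, and then to read off from it that the local solution of Theorem \ref{thm local} cannot leave every ball $\{||\nabla u||_2<N\}$ before time $T$, hence is defined on all of $[0,T]$. The hypothesis $q\ge p$ enters at exactly one place, which is also the main obstacle: the combination whose evolution (\ref{ienery}) governs, namely $||\nabla u||_2^2+||u_t||_2^2-\tfrac2p||u||_p^p+2\int_0^t||u_s||_q^q\,ds$, is not coercive, because of the sign of the source term; so one must show that the growth of $||u(t)||_p^p$ is controlled by the damping dissipation $\int_0^t||u_s||_q^q\,ds$.

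I would first rewrite (\ref{ienery}). Using $2\int_0^t\int_D|u|^{p-2}u\,u_s\,dx\,ds=\tfrac2p\big(||u(t)||_p^p-||u_0||_p^p\big)$ and adding $\tfrac4p||u(t)||_p^p$ to both sides, (\ref{ienery}) becomes
\begin{equation*}
 e(u(t))+2\int_0^t||u_s||_q^q\,ds=e(u(0))-\tfrac4p||u_0||_p^p+\tfrac4p||u(t)||_p^p+N(t)+Q(t),
\end{equation*}
where $N(t)=2\int_0^t\big(u_s,\varepsilon\sigma(s)\big)dW_s$ and $0\le Q(t)=\varepsilon^2\sum_i\int_0^t\int_D\lambda_ie_i^2\sigma^2\,dx\,ds\le\varepsilon^2c_0^2\,TrR\int_0^t||\sigma(s)||_2^2\,ds$; note $e(u(0))<\infty$ since $u_0\in H_0^1(D)\hookrightarrow L^p(D)$ and $u_1\in L^2(D)$. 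To stay rigorous one runs the whole argument with $t$ replaced by $t\wedge\tau_N$, where $\tau_N$ is the stopping time from the proof of Theorem \ref{thm local}, so that every term is integrable and $N(\cdot\wedge\tau_N)$ is a genuine martingale; I suppress this localisation below.

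Next I would estimate $||u(t)||_p^p$. Since $D$ is bounded and $q\ge p$, one has $||u_t||_p\le c_1||u_t||_q$, so
\begin{equation*}
 \frac{d}{dt}||u||_p^p=p\int_D|u|^{p-2}u\,u_t\,dx\le p||u||_p^{p-1}||u_t||_p\le pc_1||u||_p^{p-1}||u_t||_q .
\end{equation*}
I would then apply Young's inequality with exponents $q$ and $q'=q/(q-1)$, using the elementary fact that $q\ge p$ is equivalent to $(p-1)q'\le p$, so that $||u||_p^{(p-1)q'}\le1+||u||_p^p$; choosing the weight in Young small enough this gives $\tfrac{d}{dt}||u||_p^p\le\eta||u_t||_q^q+C_\eta\big(1+||u||_p^p\big)$ with $\eta$ as small as we like. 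Integrating, using $||u(s)||_p^p\le\tfrac p2 e(u(s))$, multiplying by $\tfrac4p$, and inserting into the identity above with $\eta$ so small that $\tfrac{4\eta}{p}\int_0^t||u_s||_q^q\,ds$ is absorbed by $2\int_0^t||u_s||_q^q\,ds$ on the left (the $\tfrac4p||u_0||_p^p$ terms cancel), I arrive at
\begin{equation*}
 e(u(t))+\int_0^t||u_s||_q^q\,ds\le e(u(0))+C(T)+C\int_0^t e(u(s))\,ds+N(t)+Q(t).
\end{equation*}

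Finally I would take $\sup_{0\le s\le t}$ and then $\textbf{E}$. The term $\textbf{E}\sup_sQ(s)$ is finite by (\ref{mild1}); the martingale term is handled, exactly as in (\ref{regularize9}), by the Burkholder--Davis--Gundy inequality, which gives $\textbf{E}\sup_{s\le t}|N(s)|\le\tfrac12\textbf{E}\sup_{s\le t}||u_s||_2^2+C\,\textbf{E}\int_0^T||\sigma(s)||_2^2\,ds$, and the resulting $\tfrac12\textbf{E}\sup_{s\le t}e(u(s))$ is moved to the left. This yields
\begin{equation*}
 \textbf{E}\sup_{0\le s\le t}e(u(s))\le C(T)+C\int_0^t\textbf{E}\sup_{0\le r\le s}e(u(r))\,ds,
\end{equation*}
and Gronwall's inequality gives the bound, first on $[0,t\wedge\tau_N]$ with a constant independent of $N$, then by monotone convergence as $N\to\infty$ on $[0,T\wedge\tau_\infty)$. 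In particular $\sup_{0\le s<T\wedge\tau_\infty}||\nabla u(s)||_2<\infty$ almost surely, which is incompatible with $\sup_{0\le s<\tau_\infty}||\nabla u(s)||_2=\infty$ on $\{\tau_\infty\le T\}$ (a consequence of $u=u_N$ up to $\tau_N\wedge T$ and $||\nabla u_N(\tau_N)||_2=N$, $\tau_N\uparrow\tau_\infty$); hence $\tau_\infty>T$ almost surely, the local solution of Theorem \ref{thm local} extends to all of $[0,T]$, and (\ref{global0}) holds. I expect the non-coercivity noted at the outset — forcing the damping to genuinely dominate the source — to be the crux; this is precisely where the argument, and the conclusion, break down once $p>q$.
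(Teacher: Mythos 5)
Your proposal is correct and follows essentially the same route as the paper: the energy identity, the embedding $L^q(D)\hookrightarrow L^p(D)$ for $q\ge p$ on the bounded domain, Young's inequality so that the damping absorbs the $\|u_t\|_q$ factor coming from the source, Gronwall, and a stopping-time argument to conclude $\tau_\infty>T$ a.s. The only (immaterial) differences are that you apply Young directly with exponents $(q,q/(q-1))$ using $(p-1)q/(q-1)\le p$ where the paper first uses exponents $(p,p/(p-1))$ and then a case analysis on $\|u_t\|_q\lessgtr 1$, and that you rule out $\{\tau_\infty\le T\}$ via a.s. finiteness of $\sup_s\|\nabla u(s)\|_2$ rather than the paper's Chebyshev/Borel--Cantelli estimate $P(\tau_N\le T)\le C_T N^{-2}$.
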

\begin{proof}
For any $T>0$, we will show that
$u_N(t)=u(t\wedge\tau_N)\rightarrow u$ a.s. as
$N\rightarrow\infty$ for any $t\leq T$, so that the local solution
becomes a global one. To this end, it suffices to show that
$\tau_N\rightarrow\infty$ as $N\rightarrow\infty$ with probability
one.

Recall that, for $t\in[0,\tau_N\wedge T)$,
$u(t)=u_N(t)=u(t\wedge\tau_N)$ is the local solution of
(\ref{smain}). By the Theorem~\ref{thm local}, the following
energy equation holds:
\begin{eqnarray}\label{global}
&&e\big(u(t\wedge\tau_N)\big)=e(u_0)+
4\int_0^{t\wedge t_N}\int_D |u|^{p-2}uu_t(s)dxds-2\int_0^{t\wedge t_N}\int_D |u_t(s)|^qdxds\nonumber\\
  &&\quad\quad\quad\quad\ \ +2\int_0^{t\wedge t_N}\big(u_t(s),\varepsilon\sigma\big)dW_s+\varepsilon^2\int_0^{t\wedge t_N}
  \Big(\sigma(x,s)R e_i(x),\sigma(x,s)e_i(x)\Big)ds.
\end{eqnarray}
Using H\"{o}lder inequality and Young's inequality, we get
\begin{equation}\label{global1}
 \Big|\int_D |u|^{p-2}uu_t(s)dx\Big|\leq
 ||u||_{p}^{p-1}||u_t||_p\leq \beta ||u_t||_p^p+C_\beta
 ||u||_{p}^p,
\end{equation}
where $\beta>0$ and $C_\beta$ is a constant depending on $\beta$.
Since $q\geq p$ and (\ref{condition}), the embedding inequality
yields
\begin{equation}\label{global2}
 ||u_t||_p^p\leq C||u_t||_q^p,
\end{equation}
where $C$ is the embedding constant. Therefore, from
(\ref{global}), (\ref{global1}) and (\ref{global2}), we get
\begin{eqnarray}\label{global3}
&&e\big(u(t\wedge\tau_N)\big)\leq 4C\beta\int_0^{t\wedge t_N}
||u_t(s)||_q^pds-2\int_0^{t\wedge t_N} ||u_t(s)||_q^qdxds
+4C_\beta\int_0^{t\wedge t_N} ||u||_p^{p}ds\nonumber\\
  &&\quad\quad\quad\quad\ \ +e(u_0)+2\int_0^{t\wedge t_N}\big(u_t(s),\varepsilon\sigma\big)dW_s
  +\varepsilon^2c_0^2Tr R\int_0^{t\wedge t_N}
  ||\sigma(s)||_2^2ds.
\end{eqnarray}
Using $q\geq p$, at this point we distinguish two case:

$(i)$ Either $||u_t||_q^q>1$ so we choose $\beta$ small such that
$-2||u_t||_q^q+4C\beta||u_t||_q^p\leq0$.

$(ii)$ Or  $||u_t||_q^q\leq1$, in this case we have
$-2||u_t||_q^q+4C\beta||u_t||_q^p\leq 4C\beta$.\\
Therefore in either case, we have
\begin{eqnarray}\label{global4}
&&e\big(u(t\wedge\tau_N)\big)\leq e(u_0)+4C\beta(t\wedge t_N)
+4C_\beta\int_0^{t\wedge t_N} ||u||_p^{p}ds\nonumber\\
  &&\quad\quad\quad\quad\ \ +2\int_0^{t\wedge t_N}\big(u_t(s),\varepsilon\sigma\big)dW_s+
  \varepsilon^2c_0^2Tr R\int_0^{t\wedge t_N}
  ||\sigma(s)||_2^2ds.
\end{eqnarray}
By taking the expectation of (\ref{global4}), we obtain
\[
\textbf{E}e\big(u(t\wedge\tau_N)\big)\leq e(u_0)+4C\beta(t\wedge
t_N)+
  \varepsilon^2c_0^2Tr R\int_0^{t\wedge t_N}
  \textbf{E}||\sigma(s)||_2^2ds+K\int_0^{t\wedge
  t_N}\textbf{E}e\big(u(s)\big)ds,
\]
where $K>0$ is a constant, which, by the Gronwall inequality and
(\ref{mild}) , implies that
\begin{equation}\label{global5}
\textbf{E}e\big(u(T\wedge\tau_N)\big)\leq
\big(e(u_0)+CT\big)e^{KT}\leq C_T.
\end{equation}
On the other hand, we have
\[
\textbf{E}e\big(u(T\wedge\tau_N)\big)\geq \textbf{E}
\Big(I(\tau_N\leq T)e\big(u(\tau_N)\big)\Big)\geq
C\textbf{E}\Big(||u_{\tau_N}||^2_2I(\tau_N\leq T)\Big)\geq CN^2
P(\tau_N\leq T),
\]
where $I$ is the indicator function. In view of (\ref{global5}),
the above inequality gives
\[
P(\tau_\infty\leq T)\leq P(\tau_N\leq T)\leq \frac{C_T}{N^2},
\]
which, with the aid of the Borel-cantelli Lemma. implies that
\[
P(\tau_\infty\leq T)=0.
\]
or
\[
\lim_{N\rightarrow\infty} \tau_N=\infty\ \ \ a.s..
\]
Hence, on $[0,\tau_\infty\wedge T)=[0,T)$,
$u=\lim_{N\rightarrow\infty}u_N(t)$ is the global solution as
announced. Since $T>0$ was chosen arbitrarily, we may replace
$[0,T)$ by $[0,T]$.

To verify the energy bound (\ref{global0}), by the energy equation
(\ref{ienery}), (\ref{global1}),(\ref{global2}) and (\ref{mild}),
we have
\[
e\big(u(t)\big)\leq e(u_0)+(4C\beta+\varepsilon^2C_1)t
+4KC_\beta\int_0^{t} e\big(u(s)\big)ds
+2\int_0^{t}\big(u_t(s),\varepsilon\sigma\big)dW_s,
\]
where $C_1$ and $K$ are positive constants. The above inequality
yields
\begin{equation}\label{global6}
\textbf{E}\sup_{0\leq t\leq T}e\big(u(t)\big)\leq
e(u_0)+(4C\beta+\varepsilon^2C_1)T+4KC_\beta\int_0^{T}\textbf{E}\sup_{0\leq
s\leq T}e\big(u\big)ds+2\textbf{E}\sup_{0\leq t\leq
T}\int_0^{t}\big(u_t,\varepsilon\sigma\big)dW_s.
\end{equation}
By the Burkholder-Davis-Gundy inequality, we have
\begin{eqnarray}\label{global7}
&& \textbf{E}\sup_{0\leq t\leq
T}\Big|\int_0^t\big(u_t,\varepsilon\sigma\big)dW_s\big|\leq
C_2\textbf{E}\Big(\sup_{0\leq t\leq
T}||u_t||_2\Big(\varepsilon^2\sum_{i=1}^\infty\int^T_0\big(\sigma R e_i,\sigma e_i\big)dt\Big)^{\frac{1}{2}}\Big)\\
&&\leq \frac{1}{4}\textbf{E}\sup_{0\leq t\leq
T}||u_t||_2^2+C_3c_0^2\varepsilon^2TrR
\int^T_0\textbf{E}||\sigma(t)||_2^2dt
\end{eqnarray}
for some constant $C_2$, $C_3>0$. In view of (\ref{mild}),
(\ref{global6}) and (\ref{global7}), there exist positive
constants $C_4$ and $C_5$ depending on $\beta,\ T$ etc. such that
\[
\textbf{E}\sup_{0\leq t\leq T}e\big(u(t)\big)\leq
C_4+C_5\int_0^{T}\textbf{E}\sup_{0\leq s\leq T}e\big(u\big)ds.
\]
By applying the Gronwall inequality, the above gives
\[
\textbf{E}\sup_{0\leq t\leq T}e\big(u(t)\big)\leq C_4e^{C_5T},
\]
which implies the energy bound (\ref{global0}).
\end{proof}

\section{Explosive solution of (\ref{smain})}

In this section, we switch to discuss the explosion of the
solution to (\ref{smain}) for $p>q$. Throughout this section, we
suppose that $\sigma(x,t,\omega)\equiv \sigma(x,t)$ such that
\begin{equation}\label{blowup}
  \int_0^\infty \int_D \sigma^2(x,t)dxdt<\infty.
\end{equation}
As well-known, equation (\ref{smain}) is equivalent to the
following It\^{o} system
\begin{eqnarray}\label{emain}
  \begin{cases}
     du_{t}=v_tdt,\\
     dv_t=\Big(\Delta u_t- |v_t|^{q-2}v_t+|u_t|^{p-2}u_t\Big)dt+\varepsilon\sigma(x,t)d W(t,x),\\
    u_t(x,t)=0,         \ \ \ x\in\partial D,\\
    u_0(x,0)=u_{0}(x),\ \ \ v_0(x,0)=u_1(x),
\end{cases}
\end{eqnarray}
where $(u_0,u_1)\in H_0^1(D)\times L^2(D)$. Define energy
functional $E(t)$ associated to our system
\begin{equation*}
\mathcal{E}(t)=\frac{1}{2}||v_t(t)||^2_2+\frac{1}{2}||\nabla
u_t(t)||^2_2 -\frac{1}{p}||u_t||_{p}^{p}.
\end{equation*}
Before we state and prove our explosion result, we need the
following lemmas.

\begin{lem}\label{lem blowup}
Assume (\ref{condition}) and (\ref{blowup}) hold. Let $(u_t,v_t)$
be a solution of system (\ref{emain}) with initial data
$(u_0,u_1)\in H_0^1(D)\times L^2(D)$. Then we have
\begin{equation}\label{blowup3}
\frac{d}{dt}\textbf{E}\mathcal{E}(t)=-\textbf{E}||v_t||_q^q+\frac{1}{2}\varepsilon^2
\sum_{i=1}^\infty \int_D \lambda_ie_i^2(x)\sigma^2(x,t)dx,
\end{equation}
where $r(x,x)$ is defined in Section $2$, and
\begin{eqnarray}\label{blowup2}
&&\textbf{E}\big( u_t(t), v_t(t)\big)=\big(
u_0(x),v_0(x)\big)-\int_0^t \textbf{E}||\nabla
u_{s}||_2^2ds+\int_0^t\textbf{E}||v_s(s)||_2^2ds
\nonumber\\
&&\quad\quad\quad\quad\quad\quad\quad\
-\int_0^t\textbf{E}\big(|v_s|^{q-2}v_s, u_s\big)
ds+\int_0^t\textbf{E}||u_s(s)||_{p}^{p}ds,
\end{eqnarray}
\end{lem}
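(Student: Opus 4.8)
The plan is to apply the infinite-dimensional It\^o formula to the two functionals $\mathcal{E}(t)$ and $\big(u_t(t),v_t(t)\big)$ along the solution $(u_t,v_t)$ of the It\^o system \eqref{emain}, and then take expectations so that the martingale (stochastic integral) contributions vanish. Since the solution is only guaranteed to live in $H_0^1(D)\times L^2(D)$, the functionals are not a priori smooth enough for a naive application of It\^o's formula; the rigorous route is to run the computation first on the Galerkin/regularized approximations $(u_m,u_m')$ constructed in the proof of Theorem~\ref{thm local} (where the extra regularity $u_m\in H^2$, $u_m'\in H_0^1$ makes every term classical), obtain the identities for the approximations, and then pass to the limit using the strong convergence \eqref{exist11}, the almost-everywhere convergence \eqref{exist15}, and the weak convergence \eqref{exist16} of $|u_m'|^{q-2}u_m'$ together with the uniform energy bound \eqref{global0} (here with $p>q$ one instead uses the local bound from Theorem~\ref{thm local} up to the stopping time, or simply works on $[0,T\wedge\tau_\infty)$).

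For \eqref{blowup3}: applying It\^o's formula to $\mathcal{E}(t)=\frac12\|v_t\|_2^2+\frac12\|\nabla u_t\|_2^2-\frac1p\|u_t\|_p^p$ and using $du_t=v_t\,dt$, $dv_t=\big(\Delta u_t-|v_t|^{q-2}v_t+|u_t|^{p-2}u_t\big)dt+\varepsilon\sigma\,dW$, the drift terms combine: $\big(v_t,\Delta u_t\big)=-\big(\nabla v_t,\nabla u_t\big)$ cancels $\frac{d}{dt}\frac12\|\nabla u_t\|_2^2=\big(\nabla u_t,\nabla v_t\big)$, the source term $\big(v_t,|u_t|^{p-2}u_t\big)$ cancels $-\frac{d}{dt}\frac1p\|u_t\|_p^p=-\big(|u_t|^{p-2}u_t,v_t\big)$, leaving only the damping term $-\big(v_t,|v_t|^{q-2}v_t\big)=-\|v_t\|_q^q$ from the drift and the It\^o correction term $\tfrac12\varepsilon^2\,\mathrm{Tr}\big(\sigma R\sigma^*\big)=\tfrac12\varepsilon^2\sum_i\lambda_i\int_D e_i^2(x)\sigma^2(x,t)\,dx$ from the quadratic variation of the noise. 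Taking expectations kills the genuine martingale $\varepsilon\int_0^t\big(v_s,\sigma\,dW_s\big)$, yielding \eqref{blowup3}.

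For \eqref{blowup2}: here the functional is the bilinear pairing $\phi(u_t,v_t)=\big(u_t,v_t\big)$, and It\^o's formula gives $d\big(u_t,v_t\big)=\big(v_t,v_t\big)dt+\big(u_t,dv_t\big)$ with no second-order correction (the pairing is bilinear and $u_t$ has no martingale part, so the cross-variation $d\langle u_t,v_t\rangle=0$). Substituting the equation for $dv_t$ and using $\big(u_t,\Delta u_t\big)=-\|\nabla u_t\|_2^2$ and $\big(u_t,|u_t|^{p-2}u_t\big)=\|u_t\|_p^p$ produces $d\big(u_t,v_t\big)=\big(\|v_t\|_2^2-\|\nabla u_t\|_2^2-\big(|v_t|^{q-2}v_t,u_t\big)+\|u_t\|_p^p\big)dt+\varepsilon\big(u_t,\sigma\,dW_t\big)$; integrating from $0$ to $t$ and taking expectations removes the martingale and gives \eqref{blowup2}, with initial value $\big(u_0,v_0\big)=\big(u_0,u_1\big)$.

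The main obstacle is the justification of It\^o's formula itself, since $\mathcal{E}$ and the pairing are not $C^2$ on the state space $H_0^1(D)\times L^2(D)$ and the term $\|u_t\|_p^p$ is only $C^1$ in $L^p$; the clean way around this is exactly the approximation argument sketched above — prove the two identities for the smooth Galerkin approximations where every manipulation is legitimate, control all nonlinear terms via the Sobolev embeddings \eqref{sobolev}--\eqref{sobolev2} and the uniform bounds already established, and then pass to the limit. One must check in particular that $\textbf{E}\int_0^t\big(|u_m'|^{q-2}u_m',u_m\big)ds\to\textbf{E}\int_0^t\big(|v_s|^{q-2}v_s,u_s\big)ds$ using \eqref{exist16} paired against the strong convergence $u_m\to u$ in $C([0,T];H_0^1(D))\hookrightarrow L^q((0,T)\times D)$, and that $\textbf{E}\int_0^t\|u_m'\|_q^q\to\textbf{E}\int_0^t\|v_s\|_q^q$, which follows from the strong $L^q$-convergence of $u_m'$ obtained in \eqref{exist12}; the remaining terms converge by the strong $L^2(\Omega;C)$-convergence and dominated convergence, exactly as in the passage to the limit in the proof of Theorem~\ref{thm local}.
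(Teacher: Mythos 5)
Your proposal is correct and follows essentially the same route as the paper: apply It\^o's formula, observe the cancellations among the drift terms, identify the It\^o correction $\tfrac12\varepsilon^2\sum_i\lambda_i\int_D e_i^2\sigma^2\,dx$, and take expectations to kill the martingale; for \eqref{blowup2} the same bilinear product rule with no cross-variation. The only cosmetic difference is that the paper applies It\^o to $\|v_t\|_2^2$ and then recognizes the cross terms as total derivatives of $\|\nabla u_t\|_2^2$ and $\tfrac1p\|u_t\|_p^p$, whereas you apply it directly to $\mathcal{E}(t)$; your additional remarks on justifying It\^o's formula through the Galerkin approximations are a sound (and in fact more careful) supplement to what the paper writes.
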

\begin{proof}
Using It\^{o} formula to $||v_t||_2^2$, we have
\begin{eqnarray}\label{blowup4}
&&||v_t||_2^2=||v_0||_2^2+2\int_0^t( v_s, d v_s)+\int_0^t( dv_s, dv_s)\nonumber\\
&&=||v_0||^2_2-2\int_0^t( \nabla u_s, \nabla v_s)
ds-2\int_0^t||v_s||_q^qds+2\int_0^t( v_s, |u_s|^{p-2}u_s)
ds\nonumber\\
&&\quad +2\int_0^t ( v_s, \varepsilon\sigma (x,s)dW(s))
+\varepsilon^2 \sum_{i=1}^\infty\int_0^t
\big(\sigma(x,s)Re_i,\sigma(x,s)Re_i\big)ds\nonumber\\
&&=2\mathcal{E}(0)-||\nabla
u_t(t)||^2_2-2\int_0^t||v_s||_q^qds +\frac{2}{p}||u_t(t)||_{p}^{p}\nonumber\\
&&\quad +2\int_0^t ( v_s, \varepsilon\sigma (x,s)dW(s))
+\varepsilon^2 \sum_{i=1}^\infty \int_0^t \int_D
\lambda_ie_i^2(x)\sigma^2(x,s)dxds.
\end{eqnarray}
(\ref{blowup3}) follows from (\ref{blowup4}) taking the
expectation and taking derivative. Next we turn to prove
(\ref{blowup2}).
\begin{eqnarray}\label{blowup5}
&&\big( u_t(t), v_t(t)\big)=\big( u_0, v_0\big)+\int_0^t\big( u_s(s),d v_s(s)\big)
+\int_0^t\big(v_s(s),d u_s(s)\big)\nonumber\\
&&=\big( u_0, v_0\big)-\int_0^t ||\nabla u_{s}(s)||^2_2ds-
\int_0^t\big( |v_s|^{q-2}v_s, u_s(s)\big) d\tau\nonumber\\
&&\quad+\int_0^t\big( u_s, |u_s|^{p-2}u_s\big) ds +\int_0^t \big(
u_s(s), \varepsilon\sigma (x,s)dW(s)\big) +\int_0^t
||v_s(s)||_2^2ds.
\end{eqnarray}
Then (\ref{blowup2}) follows from (\ref{blowup5}).
\end{proof}

Let
\[F(t)=\frac{1}{2}\varepsilon^2 \sum_{i=1}^\infty \int_0^t \int_D
\lambda_ie_i^2(x)\sigma^2(x,s)dxds.
\]
From (\ref{blowup}), we have
\begin{equation}\label{blowup0}
F(\infty)=\frac{1}{2}\varepsilon^2 \sum_{i=1}^\infty \int_0^\infty
\int_D \lambda_ie_i^2(x)\sigma^2(x,s)dxds\leq
\frac{1}{2}\varepsilon^2c^2_0Tr R \int_0^\infty
\int_D\sigma^2(x,s)dxds= E_1<\infty.
\end{equation}
Denote
\[
 H(t)=F(t)-\textbf{E}\mathcal{E}(t).
\]
Then, by (\ref{blowup3}), we get
\begin{equation}\label{blowup9'}
 H'(t)=F'(t)-\frac{d}{dt}\textbf{E}\mathcal{E}(t)=\textbf{E}||v_t||_q^q\geq0.
\end{equation}

\begin{lem}\label{lem blowup1}
Let $(u_t, u_t)$ is a solution of (\ref{emain}). Assume
(\ref{condition}) holds. Then there exists a positive constant
$C>1$ such that
\begin{equation}\label{blowup6}
\textbf{E}||u_t||_p^s\leq
C(F(t)-H(t)-\textbf{E}||v_t||^2_2+\textbf{E}||u_t||^p_p)
\end{equation}
for any $2\leq s\leq p$.
\end{lem}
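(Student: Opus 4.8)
The plan is to adapt the Georgiev--Todorova interpolation lemma to the expected energy. First I would rewrite the right-hand side of (\ref{blowup6}) in terms of the basic energy quantities: since $H(t)=F(t)-\textbf{E}\mathcal{E}(t)$ by definition, $F(t)-H(t)=\textbf{E}\mathcal{E}(t)$, and inserting the definition of $\mathcal{E}$,
\[
F(t)-H(t)-\textbf{E}\|v_t\|_2^2+\textbf{E}\|u_t\|_p^p=\tfrac12\textbf{E}\|\nabla u_t\|_2^2-\tfrac12\textbf{E}\|v_t\|_2^2+\tfrac{p-1}{p}\textbf{E}\|u_t\|_p^p=:R(t).
\]
The estimate will rest on the fact that in the explosion regime $\textbf{E}\mathcal{E}(t)\le0$: by (\ref{blowup9'}) the function $H$ is nondecreasing, so $H(t)\ge H(0)$, and with (\ref{blowup0}) this gives $\textbf{E}\mathcal{E}(t)=F(t)-H(t)\le F(\infty)-H(0)=E_1-H(0)\le0$ as soon as the initial energy is sufficiently negative ($H(0)\ge E_1$), a condition in force for the explosion theorem below. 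From $\textbf{E}\mathcal{E}(t)\le0$, i.e.\ $\textbf{E}\|v_t\|_2^2+\textbf{E}\|\nabla u_t\|_2^2\le\tfrac2p\textbf{E}\|u_t\|_p^p$, a one-line computation yields $R(t)\ge\textbf{E}\|\nabla u_t\|_2^2+\tfrac{p-2}{p}\textbf{E}\|u_t\|_p^p$, hence the two lower bounds $R(t)\ge\textbf{E}\|\nabla u_t\|_2^2$ and $R(t)\ge\tfrac{p-2}{p}\textbf{E}\|u_t\|_p^p$.

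Next I would interpolate the $L^p$-norm. For $2\le s\le p$ write $s=2\theta+p(1-\theta)$ with $\theta=\tfrac{p-s}{p-2}\in[0,1]$; Young's inequality gives $\|u_t\|_p^s=\big(\|u_t\|_p^2\big)^\theta\big(\|u_t\|_p^p\big)^{1-\theta}\le\theta\|u_t\|_p^2+(1-\theta)\|u_t\|_p^p$, and the Sobolev embedding $H_0^1(D)\hookrightarrow L^p(D)$ furnished by (\ref{condition}) gives $\|u_t\|_p^2\le c^2\|\nabla u_t\|_2^2$. Taking expectations and using the two lower bounds on $R(t)$,
\[
\textbf{E}\|u_t\|_p^s\le\theta c^2\,\textbf{E}\|\nabla u_t\|_2^2+(1-\theta)\,\textbf{E}\|u_t\|_p^p\le\big(c^2+\tfrac{p}{p-2}\big)R(t),
\]
which is (\ref{blowup6}) with $C=c^2+\tfrac{p}{p-2}>1$, uniformly in $s\in[2,p]$; the endpoints $s=2$ ($\theta=1$) and $s=p$ ($\theta=0$) fall out of the first and the second lower bound on $R(t)$ respectively. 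At this step Lemma~\ref{lem blowup} is used only to ensure that the identities defining $\mathcal{E}(t)$ and $H(t)$ hold \emph{in expectation}, the stochastic integral having zero mean, so no stopping-time or It\^{o}-correction argument is required.

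The delicate point --- and the place where the statement would be \emph{false} without extra input --- is the sign bookkeeping for $R(t)$, which carries the unfavourable term $-\tfrac12\textbf{E}\|v_t\|_2^2$; handling it genuinely uses that the solution stays in the region $\textbf{E}\mathcal{E}(t)\le0$. So the real content of the proof is to invoke the standing hypotheses of Section~4 (a sufficiently negative initial energy relative to the noise level $E_1$, together with (\ref{blowup})) rather than only (\ref{condition}); granted that, what remains is the short interpolation computation above.
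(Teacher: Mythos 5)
Your proof is correct, and it reaches the same target inequality by a slightly different and in fact more complete route. The paper gets the intermediate bound $\textbf{E}\|u_t\|_p^s\le C(\textbf{E}\|\nabla u_t\|_2^2+\textbf{E}\|u_t\|_p^p)$ by the crude dichotomy $\|u_t\|_p^p\lessgtr 1$ (using $s\ge2$ in the first case and $s\le p$ in the second) rather than by your exact interpolation $\|u_t\|_p^s=(\|u_t\|_p^2)^{\theta}(\|u_t\|_p^p)^{1-\theta}$ plus Young; the two devices are interchangeable here. It then simply substitutes the identity $\tfrac12\textbf{E}\|\nabla u_t\|_2^2=F(t)-H(t)-\tfrac12\textbf{E}\|v_t\|_2^2+\tfrac1p\textbf{E}\|u_t\|_p^p$ and declares the lemma proved, without addressing the point you correctly isolate as the crux: the resulting expression $2(F-H)-\textbf{E}\|v_t\|_2^2+(1+\tfrac2p)\textbf{E}\|u_t\|_p^p$ is not dominated by a constant multiple of $F-H-\textbf{E}\|v_t\|_2^2+\textbf{E}\|u_t\|_p^p$ by mere adjustment of constants, because of the negative terms. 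Your observation that the right-hand side of (\ref{blowup6}) can be negative (e.g.\ $u_0=0$, $u_1\ne0$ at $t=0$), so that the lemma is literally false under (\ref{condition}) alone, is accurate; the statement tacitly presupposes the regime $\textbf{E}\mathcal{E}(t)\le0$ guaranteed by (\ref{blowup8}), (\ref{blowup9'}) and (\ref{blowup0}), exactly as in the deterministic antecedent of Messaoudi where the analogous lemma is stated under a negative-energy hypothesis. Your derivation of the two lower bounds $R(t)\ge\textbf{E}\|\nabla u_t\|_2^2$ and $R(t)\ge\tfrac{p-2}{p}\textbf{E}\|u_t\|_p^p$ from $\textbf{E}\mathcal{E}(t)\le0$ supplies precisely the step the paper omits, so your version both proves the lemma as it is actually used in Theorem~\ref{thm blowup} and documents the extra hypothesis that ought to appear in its statement.
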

\begin{proof}
If $||u_t||^p_p\leq1$ then $||u_t||^s_p\leq ||u_t||^2_p\leq
C||\nabla u_t||^2_2$ by Sobolev embedding. If $||u||^p_p\geq1$
then $||u_t||^s_p\leq ||u_t||^p_p$. Therefore, it follows that
\begin{equation}\label{blowup7}
 \textbf{E} ||u_t||^s_p\leq C(\textbf{E}||\nabla
 u_t||^2_2+\textbf{E}||u_t||^p_p).
\end{equation}
By the definition of energy function, we have
\begin{equation}\label{blowup7'}
\frac{1}{2}\textbf{E}||\nabla
 u_t||^2_2=\textbf{E}\mathcal{E}(t)-\frac{1}{2}\textbf{E}||v_t||^2_2+\frac{1}{p}\textbf{E}||u_t||^p_p
 =F(t)-H(t)-\frac{1}{2}\textbf{E}||v_t||^2_2+\frac{1}{p}\textbf{E}||u_t||^p_p.
\end{equation}
Then, (\ref{blowup6}) follows (\ref{blowup7}) and
(\ref{blowup7'}).
\end{proof}

 In the following, we switch
to discuss the explosion of the solution to (\ref{smain}) for
$p>q$. Actually, we have

\begin{thm}\label{thm blowup}
Assume (\ref{condition}) and (\ref{blowup}) hold.  Let $(u_t,v_t)$
be the solution of (\ref{emain}) with initial data $(u_0,u_1)\in
H_0^1(D)\times L^2(D)$ satisfying
\begin{equation}\label{blowup8}
\mathcal{E}(0)\leq -(1+\beta)E_1,
\end{equation}
where $\beta>0$ is any constant and $E_1$ is defined in
(\ref{blowup0}). If $p>q$, then the solution $(u_t,v_t)$ and the
lifespan $\tau_\infty$ defined in Section $3$
with $L^2$ norm, either\\
(1) $\textbf{P}(\tau_\infty<\infty)>0$, i.e., $u_t(t)$ in $L^2$
norm blows up in finite time with positive probability, or\\
(2) there existence a positive time $T^*\in(0,T_0]$ such that
\[
 \lim_{t\rightarrow T^*}\textbf{E}\mathcal{E}(t)=+\infty.
\]
with
\[
 T_0= \frac{1-\alpha}{\alpha K \mathcal{E}^\frac{\alpha}{1-\alpha}(0)},
\]
where $\alpha$, $K$ are given later.
\end{thm}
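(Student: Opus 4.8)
The plan is to transplant the Georgiev--Todorova scheme of \cite{GT} to the stochastic setting, working throughout with expected quantities, and to close it with a concavity-type differential inequality. Recall $H(t)=F(t)-\textbf{E}\mathcal{E}(t)$ is non-decreasing with $H'(t)=\textbf{E}||v_t||_q^q$, and $H(0)=-\mathcal{E}(0)\geq(1+\beta)E_1>0$ by (\ref{blowup8}). Since $F(t)\leq F(\infty)=E_1\leq\frac{1}{1+\beta}H(0)\leq\frac{1}{1+\beta}H(t)$, we record the two facts used repeatedly: $H(t)\geq H(0)>0$, and
\[
-\textbf{E}\mathcal{E}(t)=H(t)-F(t)\geq\tfrac{\beta}{1+\beta}H(t)>0 .
\]
In particular $\frac1p\textbf{E}||u_t||_p^p\geq-\textbf{E}\mathcal{E}(t)\geq\frac{\beta}{1+\beta}H(t)$, so $\textbf{E}||u_t||_p^p$, $H(t)$ and $-\textbf{E}\mathcal{E}(t)$ are comparable up to the source term. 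If $\textbf{P}(\tau_\infty<\infty)>0$, alternative (1) holds and we are done; so assume $\tau_\infty=\infty$ a.s., and suppose, aiming at a contradiction, that $\textbf{E}\mathcal{E}(t)$ stays finite on $[0,T_0]$ (so that Lemmas \ref{lem blowup} and \ref{lem blowup1} apply there). I will show this forces some energy functional to diverge at a finite $T^*\leq T_0$, which is precisely alternative (2).

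Introduce, for constants $\alpha\in(0,1)$ and $\delta>0$ to be fixed,
\[
L(t)=H^{1-\alpha}(t)+\delta\,\textbf{E}\big(u_t(t),v_t(t)\big),
\]
which is positive for $\delta$ small because $H(0)>0$. Differentiating, using $H'=\textbf{E}||v_t||_q^q$ and (\ref{blowup2}), and then eliminating $\textbf{E}||\nabla u_t||_2^2$ via (\ref{blowup7'}), one gets, with $c_p:=1-\tfrac2p>0$,
\[
L'(t)\geq(1-\alpha)H^{-\alpha}(t)\textbf{E}||v_t||_q^q+\delta\Big(-2\textbf{E}\mathcal{E}(t)+2\textbf{E}||v_t||_2^2+c_p\textbf{E}||u_t||_p^p-\textbf{E}\big(|v_t|^{q-2}v_t,u_t\big)\Big).
\]

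The heart of the matter is the mixed damping--source term. By H\"older $|(|v_t|^{q-2}v_t,u_t)|\leq||v_t||_q^{q-1}||u_t||_q$, and by $L^p(D)\subset L^q(D)$ (valid since $q\leq p$, $D$ bounded) $||u_t||_q\leq C||u_t||_p$. The decisive device of \cite{GT} is to apply Young's inequality after inserting the weight $H^{\alpha(q-1)/q}(t)$, which reproduces on one side exactly the factor $H^{-\alpha}(t)$:
\[
||v_t||_q^{q-1}||u_t||_p=\big(H^{-\alpha(q-1)/q}||v_t||_q^{q-1}\big)\big(H^{\alpha(q-1)/q}C||u_t||_p\big)\leq\tfrac{q-1}{q}H^{-\alpha}||v_t||_q^q+\tfrac{C^q}{q}H^{\alpha(q-1)}||u_t||_p^q .
\]
Taking expectations, using Jensen ($\textbf{E}||u_t||_p^q\leq(\textbf{E}||u_t||_p^p)^{q/p}$), the bound $H(t)\leq E_1+\frac1p\textbf{E}||u_t||_p^p$, and Young once more, the last term is controlled by $\nu\,\textbf{E}||u_t||_p^p+C_\nu H(t)$ for any $\nu>0$ — legitimate exactly when $\alpha$ is small enough that $q+p\alpha(q-1)\leq p$, i.e. $\alpha\leq\frac{p-q}{p(q-1)}$, which uses $p>q$. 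Choosing first $\delta$ with $\delta\frac{q-1}{q}\leq\frac12(1-\alpha)$ (so the $H^{-\alpha}\textbf{E}||v_t||_q^q$ terms combine with nonnegative coefficient), then $\nu$ small, and using $-\textbf{E}\mathcal{E}(t)\geq\frac{\beta}{1+\beta}H(t)$ together with $\textbf{E}||u_t||_p^p\geq p(-\textbf{E}\mathcal{E}(t))$ to swallow the remainder, one arrives at $L'(t)\geq c_1\big(H(t)+\textbf{E}||v_t||_2^2+\textbf{E}||u_t||_p^p\big)\geq0$ on $[0,T_0]$, hence $L(t)\geq L(0)>0$. To close the scheme, one then checks $L^{1/(1-\alpha)}(t)\leq C\big(H(t)+|\textbf{E}(u_t,v_t)|^{1/(1-\alpha)}\big)$ and, via Cauchy--Schwarz, Jensen, the embedding $L^p(D)\subset L^2(D)$ and $\textbf{E}||\nabla u_t||_2^2,\textbf{E}||v_t||_2^2\leq\frac2p\textbf{E}||u_t||_p^p$, that $|\textbf{E}(u_t,v_t)|^{1/(1-\alpha)}\leq C(\textbf{E}||u_t||_p^p+H(t))$ provided $\alpha\leq1-\frac2p$ (so $\frac{2}{p(1-\alpha)}\leq1$). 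Combining the two displays gives $L'(t)\geq K\,L^{1/(1-\alpha)}(t)$ on $[0,T_0]$, whence $L^{-\alpha/(1-\alpha)}(t)\leq L^{-\alpha/(1-\alpha)}(0)-\frac{\alpha K}{1-\alpha}t$, so $L(t)\to+\infty$ as $t\uparrow T^*$ for some $T^*\leq\frac{1-\alpha}{\alpha K}L^{-\alpha/(1-\alpha)}(0)=:T_0$; since $\textbf{E}(u_t,v_t)$ stays controlled wherever $\textbf{E}\mathcal{E}$ is finite, this blow-up forces $H(t)$ (equivalently $-\textbf{E}\mathcal{E}(t)$, or $\textbf{E}||u_t||_p^p$) to diverge at $T^*$, which is alternative (2), completing the dichotomy.

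The only delicate point is the third paragraph: the treatment of $\textbf{E}(|v_t|^{q-2}v_t,u_t)$ through the weighted Young inequality that regenerates the matching $H^{-\alpha}$ factor, and the resulting admissible range $0<\alpha\leq\min\{\frac{p-q}{p(q-1)},\,1-\frac2p\}$ of the exponent; this is exactly where $p>q$ is used and where the energy functional must be modified relative to the linear-damping arguments of \cite{C,C3,BTW}. Verifying that the stochastic alternative is genuine — pathwise global existence versus divergence of the expected energy — is a secondary and more routine bookkeeping step.
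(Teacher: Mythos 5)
Your proposal is correct and follows essentially the same route as the paper: the same functional $L(t)=H^{1-\alpha}(t)+\mu\textbf{E}(u_t,v_t)$, the same Georgiev--Todorova weighted Young treatment of the damping--source coupling $\textbf{E}(|v_t|^{q-2}v_t,u_t)$ (you insert the $H^{\alpha(q-1)/q}$ weight pathwise, while the paper extracts the factor $(\textbf{E}||u_t||_p^p)^{1/p-1/q}\lesssim H^{-\alpha}$ after applying H\"older in expectation --- the same device, yielding only a slightly different admissible range for $\alpha$), and the same closing differential inequality $L'\geq K L^{1/(1-\alpha)}$. The remaining loose ends (absorbing the Young constant $C_\nu$ via a normalization of the type $H(0)>1$, and the sign of the divergence of $\textbf{E}\mathcal{E}$) are shared with, or even handled more carefully than, the paper's own proof.
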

\begin{proof}
For the lifespan $\tau_\infty$ of the solution $\{u_t(t);\
t\geq0\}$ of (\ref{smain}) with $L^2$ norm, let us consider the
case when $\textbf{P}(\tau_\infty=+\infty)=1$. Then, for
sufficiently large $T>0$, by (\ref{blowup9'}) and (\ref{blowup8}),
we have
\begin{equation}\label{blowup9}
0< (1+\beta)E_1\leq  -\mathcal{E}(0)=H(0)\leq H(t)\leq
F(t)+\frac{1}{p}\textbf{E}||u||^p_p\leq
E_1+\frac{1}{p}\textbf{E}||u||^p_p.
\end{equation}
Define by
\[
 L(t):=H^{1-\alpha}(t)+\mu\textbf{E}(u_t,v_t),
\]
for small $\mu$ to be chosen later and for
\begin{equation}\label{blowup10}
0<\alpha<\min\Big\{\frac{1}{2},\frac{p-q}{pq}\Big\}.
\end{equation}
Taking a derivative of $L(t)$ and using (\ref{blowup2}) and
(\ref{blowup9'}), we obtain
\begin{eqnarray}\label{blowup11}
&&L'(t)=(1-\alpha)H^{-\alpha}(t)H'(t)+\mu\Big(-\textbf{E}||\nabla u_t||_2^2-\textbf{E}(|v_t|^{q-2}v_t,u_t)
+\textbf{E}||u_t||^p_p+\textbf{E}||v_t||^2_2\Big)\nonumber\\
&&\quad\quad\ =(1-\alpha)H^{-\alpha}(t)\textbf{E}||v_t||_q^q+\mu p
H(t)+\mu(\frac{p}{2}+1)\textbf{E}||v_t||^2_2\nonumber\\
&&\quad\quad\ \ \ +\mu(\frac{p}{2}-1)\textbf{E}||\nabla
u_t||_2^2-\mu\textbf{E}(|v_t|^{q-2}v_t,u_t)-\mu p F(t).
\end{eqnarray}
Exploiting the inequality $\textbf{E}||u_t||_q^q\leq C
\textbf{E}||u_t||_p^q$ and the assumption $q<p$, we obtain
\begin{eqnarray}\label{blowup11'}
&&\Big|\textbf{E}(|v_t|^{q-2}v_t,u_t)\Big|\leq\big(\textbf{E}||v_t||^q_q\big)^{\frac{q-1}{q}}
\big(\textbf{E}||u_t||^q_q\big)^{\frac{1}{q}}\leq
C\big(\textbf{E}||v_t||^q_q\big)^{\frac{q-1}{q}}\big(\textbf{E}||u_t||^q_p\big)^{\frac{1}{q}}\nonumber\\
 &&\leq
C\big(\textbf{E}||v_t||^q_q\big)^{\frac{q-1}{q}}\big(\textbf{E}||u_t||^p_p\big)^{\frac{1}{p}}\leq
C\big(\textbf{E}||v_t||^q_q\big)^{\frac{q-1}{q}}\big(\textbf{E}||u_t||^p_p\big)^{\frac{1}{q}}
\big(\textbf{E}||u_t||^p_p\big)^{\frac{1}{p}-\frac{1}{q}}.
\end{eqnarray}
The Young's inequality gives
\begin{equation}\label{blowup10'}
\big(\textbf{E}||v_t||^q_q\big)^{\frac{q-1}{q}}\big(\textbf{E}||u_t||^p_p\big)^{\frac{1}{q}}\leq
\frac{q-1}{q}k\textbf{E}||v_t||^q_q+\frac{k^{1-q}}{q}\textbf{E}||u_t||^p_p.
\end{equation}
In view of (\ref{blowup9}), we get
\[
 \textbf{E}||u_t||^p_p\geq p\big(H(t)-F(t)\big)\geq \kappa H(t),
\]
where $\kappa=p\beta/(1+\beta)$. We choose $\alpha$ satisfying
(\ref{blowup10}) and assume $H(0)>1$, we have
\begin{equation}\label{blowup01}
 \big(\textbf{E}||u_t||^p_p\big)^{\frac{1}{p}-\frac{1}{q}}\leq
 \kappa^{\frac{1}{p}-\frac{1}{q}} H^{\frac{1}{p}-\frac{1}{q}}(t)
 \leq \kappa^{\frac{1}{p}-\frac{1}{q}}H^{-\alpha}(t) \leq \kappa^{\frac{1}{p}-\frac{1}{q}}H^{-\alpha}(0).
\end{equation}
Substituting (\ref{blowup10'}) and (\ref{blowup01}) into
(\ref{blowup11'}), we obtain
\begin{equation}\label{blowup02}
 \Big|\textbf{E}(|v_t|^{q-2}v_t,u_t)\Big|\leq C_1
 \frac{q-1}{q}k\textbf{E}||v_t||^q_q H^{-\alpha}(t)+C_1\frac{k^{1-q}}{q}\textbf{E}||u_t||^p_p
 H^{-\alpha}(0),
\end{equation}
where $C_1=C\kappa^{\frac{1}{p}-\frac{1}{q}}$. Thus, from
(\ref{blowup11}) and (\ref{blowup02}) it follow that
\begin{eqnarray}\label{blowup12}
&&L'(t)\geq\Big((1-\alpha)-C_1\frac{q-1}{q}\mu
k\Big)H^{-\alpha}(t)\textbf{E}||v_t||_q^q+\mu p
H(t)+\mu(\frac{p}{2}+1)\textbf{E}||v_t||^2_2-\mu p F(t)\nonumber\\
&&\quad\quad\ \ \ +\mu(\frac{p}{2}-1)\textbf{E}||\nabla u_t||_2^2
-\mu C_1\frac{k^{1-q}}{q}H^{-\alpha}(0) \textbf{E}||u_t||^p_p.
\end{eqnarray}
We now use Lemma \ref{lem blowup1} with $s=p$ to deduce from
(\ref{blowup12})
\begin{eqnarray}\label{blowup15}
&&L'(t)\geq\Big((1-\alpha)-C_1\frac{q-1}{q}\mu
k\Big)H^{-\alpha}(t)\textbf{E}||v_t||_q^q+\mu p
H(t)+\mu(\frac{p}{2}+1)\textbf{E}||v_t||^2_2-\mu p F(t)\nonumber\\
&&\quad\quad\ \ \ +\mu(\frac{p}{2}-1)\textbf{E}||\nabla u_t||_2^2
-\mu k^{1-q}C_2\Big(
F(t)-H(t)-\textbf{E}||v_t||^2_2+\textbf{E}||u_t||^p_p\Big)\nonumber\\
&&\quad\quad \ \geq \Big((1-\alpha)-C_1\frac{q-1}{q}\mu
k\Big)H^{-\alpha}(t)\textbf{E}||v_t||_q^q+\mu(\frac{p}{2}+1+k^{1-q}C_2)\textbf{E}||v_t||^2_2
+\mu(\frac{p}{2}-1)\textbf{E}||\nabla u_t||_2^2\nonumber\\
&&\quad\quad\ \ \ +\mu(p+k^{1-q}C_2)H(t)-\mu
k^{1-q}C_2\textbf{E}||u_t||^p_p-\mu(p+k^{1-q}C_2)F(t),
\end{eqnarray}
where $C_2=C_1H^{-\alpha}(0)/q$. Noting that
\[
 H(t)=F(t)+\frac{1}{p}\textbf{E}||u_t||^p_p-\frac{1}{2}\textbf{E}||\nabla u_t||^2_2-\frac{1}{2}\textbf{E}||v_t||^2_2
\]
and writing $p=2C_3+(p-2C_3)$, where $C_3<(p-2)/2$, the estimate
(\ref{blowup15}) implies
\begin{eqnarray}\label{blowup16}
&&L'(t)\geq \Big((1-\alpha)-C_1\frac{q-1}{q}\mu
k\Big)H^{-\alpha}(t)\textbf{E}||v_t||_q^q+\mu(\frac{p}{2}+1+k^{1-q}C_2-C_3)\textbf{E}||v_t||^2_2\nonumber\\
&&\quad\quad\ \ \ +\mu(\frac{p}{2}-1-C_3)\textbf{E}||\nabla u_t||_2^2+\mu(p-2C_3+k^{1-q}C_2)H(t)\nonumber\\
&&\quad\quad\ \ \ +\mu
\big(\frac{2C_3}{p}-k^{1-q}C_2\big)\textbf{E}||u_t||^p_p-\mu(p-2C_3+k^{1-q}C_2)F(t).
\end{eqnarray}
In view of (\ref{blowup8}) and (\ref{blowup9}), we get
\[
 (p-2C_3+k^{1-q}C_2)F(t)\leq (p-2C_3+k^{1-q}C_2)E_1\leq
 \frac{(p-2C_3+k^{1-q}C_2)}{1+\beta}H(t).
\]
Substituting the above inequality into (\ref{blowup16}), we get
\begin{eqnarray*}
&&L'(t)\geq \Big((1-\alpha)-C_1\frac{q-1}{q}\mu
k\Big)H^{-\alpha}(t)\textbf{E}||v_t||_q^q+\mu\Big((\frac{p}{2}+1+k^{1-q}C_2-C_3)\textbf{E}||v_t||^2_2\nonumber\\
&&\quad\quad\  +(\frac{p}{2}-1-C_3)\textbf{E}||\nabla
u_t||_2^2+(p-2C_3+k^{1-q}C_2)\frac{\beta}{1+\beta}H(t) +
\big(\frac{2C_3}{p}-k^{1-q}C_2\big)\textbf{E}||u_t||^p_p\Big).
\end{eqnarray*}
At this point, we choose $k$ large enough so that the above
inequality becomes
\begin{equation}\label{blowup17}
L'(t)\geq \Big((1-\alpha)-C_1\frac{q-1}{q}\mu
k\Big)H^{-\alpha}(t)\textbf{E}||v_t||_q^q+\mu\gamma\big(H(t)+\textbf{E}||\nabla
u_t||_2^2+\textbf{E}||v_t||^2_2+\textbf{E}||u_t||^p_p\big),
\end{equation}
where $\gamma>0$ is the minimum of the coefficients of $H(t)$,
$\textbf{E}||\nabla u_t||_2^2,\ \textbf{E}||v_t||^2_2,\
\textbf{E}||u_t||^p_p$ in (\ref{blowup17}). Once $k$ is fixed, we
pick $\mu$ small enough so that
\[
(1-\alpha)-C_1\frac{q-1}{q}\mu k\geq0
\]
and
\[
L(0)=H^{1-\alpha}(0)+\mu (u_0,u_1)>0.
\]
Therefore, (\ref{blowup17}) takes on the form
\begin{equation}\label{blowup18}
L'(t)\geq \mu\gamma\big(H(t)+\textbf{E}||\nabla
u_t||_2^2+\textbf{E}||v_t||^2_2+\textbf{E}||u_t||^p_p\big)\geq0.
\end{equation}
Consequently, we have
\[
L(t)\geq L(0)>0,\ \ \ \forall t\geq0.
\]

By H\"{o}lder inequality, we get
\[
\Big|\textbf{E}\big(u_t,v_t\big)\Big|\leq
\big(\textbf{E}||u_t||_2^2\big)^{\frac{1}{2}}\big(\textbf{E}||v_t||_2^2\big)^{\frac{1}{2}}
\leq
C\big(\textbf{E}||u_t||_p^2\big)^{\frac{1}{2}}\big(\textbf{E}||v_t||_2^2\big)^{\frac{1}{2}},
\]
which, by young's inequality implies
\begin{eqnarray}\label{blowup19}
&&\Big|\textbf{E}\big(u_t,v_t\big)\Big|^{\frac{1}{1-\alpha}}
 \leq C \big(\textbf{E}||u_t||_p^2\big)^{\frac{1}{2(1-\alpha)}}
 \big(\textbf{E}||v_t||_2^2\big)^{\frac{1}{2(1-\alpha)}}\nonumber\\
&&\quad\quad\quad\quad\quad\ \ \ \ \leq
C\Big(\big(\textbf{E}||u_t||_p^2\big)^{\frac{\theta}{2(1-\alpha)}}+
\big(\textbf{E}||v_t||_2^2\big)^{\frac{\eta}{2(1-\alpha)}}\Big),
\end{eqnarray}
for $1/\theta+1/\eta=1$. We take $\eta=2(1-\alpha)$. Then, by
(\ref{blowup10}),
\[
\frac{\theta}{2(1-\alpha)}=\frac{1}{1-2\alpha}=\frac{pq}{pq-2p+2q}\leq\frac{p}{2},
\]
i.e., $2/(1-2\alpha)\leq p$. Using $\alpha<1/2$, (\ref{blowup19})
becomes
\[
\Big|\textbf{E}\big(u_t,v_t\big)\Big|^{\frac{1}{1-\alpha}}\leq
C\Big(\big(\textbf{E}||u_t||_p^2\big)^{\frac{1}{1-2\alpha}}+
\textbf{E}||v_t||_2^2\Big)\leq
C\Big(\textbf{E}||u_t||_p^{\frac{2}{1-2\alpha}}+
\textbf{E}||v_t||_2^2\Big).
\]
Using Lemma \ref{lem blowup1} with $s=2/(1-2\alpha)$, we get
\begin{equation}\label{blowup20}
\Big|\textbf{E}\big(u_t,v_t\big)\Big|^{\frac{1}{1-\alpha}}\leq
C\big(H(t)+\textbf{E}||\nabla
u_t||_2^2+\textbf{E}||v_t||^2_2+\textbf{E}||u_t||^p_p\big), \ \ \
\forall t\geq0.
\end{equation}
Therefore, we have
\begin{eqnarray}\label{blowup21}
&&L^{\frac{1}{1-\alpha}}(t)=\Big(H^{1-\alpha}(t)+\mu\textbf{E}\big(u_t,v_t\big)\Big)^{\frac{1}{1-\alpha}}
 \leq 2^{\frac{1}{1-\alpha}} \Big(H(t)+\mu\Big|\textbf{E}\big(u_t,v_t\big)\Big|^{\frac{1}{1-\alpha}}\Big)\nonumber\\
&&\quad\ \ \ \ \ \leq C\big(H(t)+\textbf{E}||\nabla
u_t||_2^2+\textbf{E}||v_t||^2_2+\textbf{E}||u_t||^p_p\big), \ \ \
\forall t\geq0.
\end{eqnarray}
Combining (\ref{blowup18}) and (\ref{blowup21}), we obtain
\begin{equation}\label{blowup22}
L'(t)\geq K L^{\frac{1}{1-\alpha}}, \ \ \ \forall t\geq0,
\end{equation}
where $K$ is a positive constant. A simple integration of
(\ref{blowup22}) over $(0,t)$ then yields
\begin{equation}\label{blowup23}
L^{\frac{\alpha}{1-\alpha}}(t)\geq
\frac{1-\alpha}{(1-\alpha)L^{-\frac{\alpha}{1-\alpha}}(0)-\alpha
Kt}.
\end{equation}
Let
\[
 T_0= \frac{1-\alpha}{\alpha K \mathcal{E}^\frac{\alpha}{1-\alpha}(0)}.
\]
Then $L(t)\rightarrow+\infty$ as $t\rightarrow T_0$. This means
that there exists a positive time $T^*\in(0,T_0]$ such that
\[
 \lim_{t\rightarrow T^*}\textbf{E}\mathcal{E}(t)=+\infty.
\]

As for the case when $\textbf{P}(\tau_\infty=+\infty)<1$ (i.e.,
$\textbf{P}(\tau_\infty<+\infty)>0$), then $u_t(t)$ in $L^2$ norm
blows up in finite time interval $[0,\tau_\infty]$ with positive
probability.
\end{proof}

\begin{rem}
In the classical (deterministic) case of $\varepsilon=0$, it is
well known that for $(u_0,v_0)\in H^1_0(D)\times L^2(D)$, the
condition $\mathcal{E}(0)\leq0$ already imply finite-time blowup
of (\ref{smain}) (see e.g. \cite{M1}). If $\varepsilon>0$, by our
results, to balance the influence of $W(t,x)$ such that the local
solution of (\ref{smain}) is blow-up with positive probability or
explosive in $L^2$ sense, the initial energy should be satisfied
$\mathcal{E}(0)\leq -\frac{1}{2}(1+\beta)\varepsilon^2
r_0^2\int_0^\infty \int_D \sigma^2(x,t)dxdt$.
\end{rem}


\end{document}